\theoremstyle{cupthm}
\newtheorem{thm}{Theorem}[section]
\newtheorem{cor}[thm]{Corollary}
\theoremstyle{cupdefn}
\newtheorem{defn}[thm]{Definition}
\theoremstyle{cuprem}
\numberwithin{equation}{section}
\newtheorem{example}[thm]{Example}
\begin{document}
\runningtitle{Position Vectors of Curves With Recpect to Darboux Frame}
\title{Position Vectors of Curves With Recpect to Darboux Frame in the Galilean Space $G^3$}
\author[1]{Tevf{\.i}k \c{S}ah{\.i}n}
\address[1]{Department of Mathematics, Amasya University, Amasya, Turkey\email{tevfik.sahin@amasya.edu.tr, tevfiksah@gmail.com}}
\author[2]{Buket Ceylan D{\.i}r{\.i}\c{s}en}
\address[2]{Department of Mathematics, Amasya University, Amasya, Turkey\email{bceylandirisen@gmail.com}}


\authorheadline{ T. \c{S}ah{\.i}n and B. Ceylan D{\.i}r{\.i}\c{s}en}


\support{FMB-BAP 16-0213}

\begin{abstract}
In this paper, we investigate the position vector of a curve on the surface in the Galilean 3-space $G^3$. Firstly, the position vector of a curve with respect to the Darboux frame is determined. Secondly, we obtain the standard representation of the position vector of the curve with respect to Darboux frame in terms of the geodesic, normal curvature and geodesic torsion. As a result of this, we define the position vectors of geodesic, asymptotic and normal line along with some special curves with respect to Darboux frame. Finally, we elaborate on some examples and provide their graphs. 
\end{abstract}

\classification{53A35; 53B30}
\keywords{Position vector, Darboux frame, Geodesic, Galilean 3-space}

\maketitle

\section{Introduction}
The fundamental theorem of curves state that curves are determined by curvatures \cite{krey}. Thus, curvature functions provide us with some special and important information about curves.
For example, a circular helix is a geometric curve with curvature  $\kappa\equiv\textit{cons.}\neq0$, torsion $\tau\equiv\textit{cons.}\neq0$ \cite{mcc}. Straight lines and circles are curves that $\kappa\equiv 0$ and $\kappa\equiv cons.$, $\tau\equiv0$, respectively. Also, These curves are degenerate helices. Helices appear in many different branch of science such as engineering, biology, chemistry, CAD, etc. 

In addition, curvature functions gives us information about not only curves but also surfaces on which curves lie. The curvatures $\kappa_g \,\, \textit{geodesic curvature}, \kappa_n \,\, \textit{normal curvature}$ and $\tau_g $ \textit{geodesic torsion} charactarize geodesic, asymptotic curve, and line of curvature, respectively.
The curves emerges from the solution of some important physical problems. They are also important in the theory of curves and surfaces.
For example, geodesics arise from the problem of finding 'shortest curves' joining two points of a surface $M$. It was first considered by Johann Bernoulli (1697).
 Obviously this is a typically problem of calculus of variations. 
Also, a curve  $C$ on a surface $M$ is called a \textit{geodesic curve} or \textit{geodesic} if its geodesic curvature $\kappa_g$ vanishes identically \cite{krey}.
In what follows, we state three different definition lines in planes. We want to emphasize that geodesics can be seen as extension of this idea to curves in surfaces.

A "line" on a surfaces can be seen as extension of the familiar properties of lines in the plane: For example, lines are 
(1) The curves of shortest length joining two points (Archimedes). 
(2) The curves of plane curvature identically zero (Huygens, Leibniz, Newton). 
(3) The curves whose tangent and its derivative are linearly dependent \cite{mcc}.

%
%
%
As stated in \cite{aligal}, the problem of the determination of the position 
vector of a space curve with respect to the Frenet frame is still open in the Euclidean space. Generally, it is hard to solve this problem. However, it is solved for some special curves such as plane curves, helix and slant helix \cite{alimin,alieuc,izu}. On the other hand, in the Galilean space $G^3$, the foregoing problem is solved for all curves \cite{aligal}.
The main aim of this study is to solve the above problem for all curves on a surface in $G^3$ with respect to the Darboux Frame. Firstly, we determine the position vector of a curve on a surface in $G^3$ in terms of geodesic, normal curvature and geodesic torsion with respect to the Darboux and standard frame. Secondly, we shall give position vectors of some special curves such as geodesic, asymptotic curve, line of curvature on a surface in $G^3$.

Also, we will relate foregoing curves with helix, Salkowski curve and anti-Salkowski curve (see (\ref{helsal}) ).
That is, we shall give special cases of these curves  such as: geodesics that are circular helix, genaralized helix or Salkowski, etc. Furthermore, we provide graphs of some special curves.  

Last but not least, we want to emphasize that the results of this study can be extended to families of surfaces that have common geodesic curve.


\section{Preliminaries}

As it is well known, Galilean geometry  is associated with the Galilean principle of relativity. 
The Galilean space $G^{3}$ is one of the Cayley-Klein spaces equipped with the projective metric of signature $\left( 0,0,+,+\right) $ \cite{mol}. The absolute figure of the Galilean space is the ordered triple $\{w,f,I\}$, where $w$ is an ideal (absolute) plane, 
$f$  is a line (absolute line) in $w$, and $I$ is a fixed eliptic involution of points of $f$. 

%

In non-homogeneous coordinates the group of motion of $G^3$ (i.e. the group of isometries of $G^{3}$) has the form
define :%
\begin{eqnarray}
\overline{x} &=&a_1+x,  \notag \\
\overline{y} &=&a_2+a_3x+y\cos \varphi +z\sin \varphi , \\
\overline{z} &=&a_4+a_5x-y\sin \varphi +z\cos \varphi,  \notag
\end{eqnarray}%
where $a_1, a_2, a_3, a_4, a_5$, and $\varphi$ are real numbers \cite{pav}. 
If the first component of a vector is not zero, then the vector is called as non-isotropic, otherwise it is called isotropic vector \cite{pav}.

The scalar product of two vectors $\mathbf{v}=(v_{1},v_{2},v_{3})$ and $\mathbf{w}=(w_{1},w_{2},w_{3})$ in $G^{3}$ is defined by
$$\mathbf{v}\cdot _{G}\mathbf{w} = \left\{
\begin{array}{lr}
v_{1}w_{1} , &  \text{if } v_{1}\neq 0 \text{ or } w_{1}\neq 0\, \ \  \ \\
v_{2}w_{2}+v_{3}w_{3} ,&  \text{if } v_{1}=0 \text{ and } w_{1}=0\,.
\end{array}\right.$$
If $\mathbf{v}\cdot _{G}\mathbf{w}=0$, then $\mathbf{v}$ and $\mathbf{w}$
are perpendicular. In particular, every isotropic vector is perpendicular to every non-isotropic vector.
The norm of $\mathbf{v}$ is defined by
$$\Vert \mathbf{v}\Vert_{G}=\sqrt{\vert\mathbf{v}\cdot_{G}\mathbf{v}\vert}.$$
Let $I\subset \mathbb R$ and let $\alpha :I\rightarrow G^{3}$ be a curve parameterized by arc length (we abbreviate as p.b.a.l) with curvature $\kappa>0$ and torsion $\tau$.
If $\alpha$ is a curve p.b.a.l. that is,
\begin{equation*}
\alpha \left( x\right) =\left( x,y\left( x\right) ,z\left( x\right) \right) ,
\end{equation*}%
then the Frenet frame fields are given by
\begin{eqnarray}
T\left(x\right) &=&\alpha ^{\prime }\left( x\right), 
\notag \\
N\left( x\right) &=& \frac{\alpha''(x)}{\Vert \alpha''(x)\Vert_{G}}
\\
B\left( x\right) &=&T(x)\times _{G}B(x) \\&=&\frac{1}{\kappa \left( x\right) }\left( 0,
-z^{\prime \prime }\left( x\right) , y^{\prime \prime }\left(
x\right) \right) ,  \notag
\end{eqnarray}%
where $\kappa \left( x\right) $ and $\tau \left(
x\right) $ are defined by%
\begin{equation}
\kappa \left( x\right) ={\Vert \alpha''(x) \Vert }_{G}, { \ \ }\tau
\left( x\right) =\frac{\det \left( \alpha ^{\prime }\left( x\right) ,\alpha
	^{\prime \prime }\left( x\right) ,\alpha ^{\prime \prime \prime }\left(
	x\right) \right) }{\kappa ^{2}\left( x\right) }\,.
\end{equation}%

Also, where $\times _{G}$ is the pseudo-Galilean cross product  defined by
\begin{equation}
\mathbf v\times _{G}\mathbf w=%
\begin{vmatrix}
0 & \mathbf e_{2} &\mathbf {e_{3}} \\ 
v_{1} & v_{2} & v_{3} \\ 
w_{1} & w_{2} & w_{3}%
\end{vmatrix}%
\end{equation}%
\newline
for $\mathbf{v=}\left( v_{1},v_{2},v_{3}\right) $ and $\mathbf{w=}\left(
w_{1},w_{2},w_{3}\right) $ \cite{pav1}. 
The vectors $T, N $ and $B$ are called the vectors of the tangent, the principal normal
and the binormal vector field, respectively \cite{pav1}. Therefore, the Frenet-Serret formulae can be written as
\begin{equation}
\begin{bmatrix}
T \\ 
N \\ 
B%
\end{bmatrix}%
^{\prime }=%
\begin{bmatrix}
0 & \kappa & 0 \\ 
0 & 0 & \tau \\ 
0 &- \tau & 0%
\end{bmatrix}%
\begin{bmatrix}
T \\ 
N \\ 
B%
\end{bmatrix}\,.
\end{equation}%

Frame fields constitute a very useful tool for studying curves and surfaces. However, the Frenet frame ${T, N, B}$ of $\alpha$ is not useful to describe the geometry of surface $M$. Since $N$ and $B$ in general will be neither tangent nor perpendicular to M. Therefore, we require another frame of $\alpha$ for study the relation between the geometry of $\alpha$ and $M$. There is such a frame field that is called Darboux frame field of $\alpha$ with respect to $M$. The Darboux frame field consists of the triple of vector fields ${T, Q, n}$. The first and last vector fields of this frame $T$ and $n$ are a unit tangent vector field of $\alpha$ and unit normal vector field of $M$ at the point $\alpha(x)$ of $\alpha$. Let $Q=n\times_{G}T$ be the tangential-normal. 


\begin{thm}Let  $\alpha :I\subset \mathbb{R}\rightarrow M\subset G^{3}$ be a unit-speed curve, and let {T, Q, n} be the Darboux frame field of $\alpha$ with respect to M. Then
	\begin{equation}\label{Darboux}
	\begin{bmatrix}
	T \\ 
	Q \\ 
	n
	\end{bmatrix}
	^{\prime }=
	\begin{bmatrix}
	0 & \kappa_g & \kappa_n \\ 
	0 & 0 & \tau_g \\ 
	0 & -\tau_g & 0
	\end{bmatrix}
	\begin{bmatrix}
	T \\ 
	Q \\ 
	n
	\end{bmatrix}\,.
	\end{equation}
	where $\kappa_g$ and $\kappa_n$ give the tangential and normal component of the curvature vector, and these functions are called the geodesic and the normal curvature, respectively \cite{sahin}.
\end{thm}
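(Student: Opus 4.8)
The plan is to exploit the orthonormality of the Darboux frame $\{T,Q,n\}$ together with the distinguished role of the isotropic coordinate in $G^3$. First I would record the structural facts forced by the geometry. Since $\alpha$ is unit-speed, $T=\alpha'$ is a non-isotropic unit vector whose first coordinate is identically $1$; consequently $Q$ and $n$, being perpendicular to $T$, are isotropic, i.e.\ their first coordinates vanish. A direct expansion of the cross-product formula gives $Q=n\times_G T=(0,n_3,-n_2)$, so that $\{Q,n\}$ is an orthonormal pair in the isotropic plane:
\begin{equation*}
Q\cdot_G Q=n\cdot_G n=1,\qquad Q\cdot_G n=0.
\end{equation*}

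Next, since $\{T,Q,n\}$ is a frame, I would write each derivative as a linear combination
\begin{equation*}
T'=a_{11}T+a_{12}Q+a_{13}n,\quad Q'=a_{21}T+a_{22}Q+a_{23}n,\quad n'=a_{31}T+a_{32}Q+a_{33}n.
\end{equation*}
The key observation that makes the entire first column vanish is that differentiation annihilates the non-isotropic direction: because the first coordinate of $T$ is the constant $1$, the vector $T'=\alpha''$ has zero first coordinate and is isotropic, and likewise $Q'$ and $n'$ are isotropic since $Q,n$ have constant first coordinate $0$. Comparing first coordinates in the three expansions immediately forces $a_{11}=a_{21}=a_{31}=0$.

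For the remaining entries I would differentiate the orthonormality relations. This is legitimate precisely because $Q$ and $n$ remain isotropic, so their scalar product reduces to the ordinary bilinear form $v_2w_2+v_3w_3$, to which the product rule applies without case distinctions. Differentiating $Q\cdot_G Q=1$ and $n\cdot_G n=1$ yields $Q'\cdot_G Q=0$ and $n'\cdot_G n=0$, hence $a_{22}=a_{33}=0$; differentiating $Q\cdot_G n=0$ gives $Q'\cdot_G n+Q\cdot_G n'=0$, that is $a_{23}+a_{32}=0$. Setting $\kappa_g:=a_{12}$, $\kappa_n:=a_{13}$, and $\tau_g:=a_{23}$ then reproduces exactly the asserted matrix, with $\kappa_g$ and $\kappa_n$ being by construction the tangential and normal components of the curvature vector $T'$.

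The only genuinely delicate point is the consistent handling of the Galilean scalar product: before differentiating any relation one must verify that both vectors involved are isotropic, so that the simple formula $v_2w_2+v_3w_3$ governs and the product rule is valid. The isotropy of the surface normal $n$ (and hence of $Q=n\times_G T$) is where the surface geometry of $G^3$ enters; establishing it at the outset is the main obstacle, after which the computation proceeds mechanically.
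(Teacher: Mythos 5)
Your proposal is correct and takes essentially the same approach as the paper: expand the frame derivatives in the Galilean-orthonormal isotropic pair $\{Q,n\}$, read off the coefficients via the scalar product, and define $\kappa_g$, $\kappa_n$, $\tau_g$ as those components. In fact you supply precisely the details the paper compresses into ``the other formulae are proved in a similar fashion'' --- the isotropy argument that kills the first column (constant first coordinates of $T$, $Q$, $n$) and the skew-symmetry $a_{23}+a_{32}=0$ obtained by differentiating $Q\cdot_G n=0$.
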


\begin{proof}	We have 
	\begin{equation}\label{c}
	\begin{split}
	T'&= (T'\cdot_{G}Q)Q+(T'\cdot_{G}n)n
	\\&= (\alpha''\cdot_{G}Q)Q+(\alpha''\cdot_{G}n)n
	\\&=\kappa_g Q + \kappa_n n.
	\end{split}
	\end{equation}
	The other formulae are proved in a similar fashion.
\end{proof}

Also, (2.7) implies the important relations
\begin{equation}\label{kt}
\kappa^2(x)=\kappa^2_g(x)+\kappa^2_n(x),  \hskip .5cm \tau(x)=-\tau_g(x)+\frac{\kappa'_g(x)\kappa_n(x)-\kappa_g(x)\kappa'_n(x)}{\kappa^2_g(x)+\kappa^2_n(x)}
\end{equation} 
where $\kappa^2(x)$ and $\tau(x)$ are the square curvature and the torsion of $\alpha$, respectively.
We refer to \cite{pav, pav1, ros, yag} for detailed treatment of Galilean and pseudo-Galilean geometry.


\section{Position vectors of a curve in Galilean space}

In this section, we will get an arbitrary curve on a surface in $G^3$. We will analyze position vector of the curve with respect to the Darboux and standard frame in $G^3$.
\begin{thm}\label{teo1}
	The position vector $\beta(x)$ of an arbitrary curve on a surface with respect to the Darboux frame in the Galilean space $G^3$ is given by:
	\begin{equation}\label{eq00}
	\begin{split}
	\beta(x)=(x+c_1)\mathbf{T}&+\Bigg\{-\frac{(x+c_1)\kappa_n(x)}{\tau_g(x)}+\bigg(c_2-\int{f(x)\tau_g(x)\sin{[t(x)]}}\,dx\bigg)\sin{[t(x)]}\\&-\bigg(c_3+\int{f(x)\tau_g(x)\cos{[t(x)]}}\,dx\bigg)\cos{[t(x)]}\Bigg\}\mathbf{Q}\\&+\Bigg\{\bigg(c_2-\int{f(x)\tau_g(x)\sin{[t(x)]}}\,dx\bigg)\cos{[t(x)]}\\&+\bigg(c_3+\int{f(x)\tau_g(x)\cos{[t(x)]}}\,dx\bigg)\sin{[t(x)]}\Bigg\}\mathbf{n}
	\end{split}
	\end{equation}
	where $f(x)=\frac{\lambda_1(x)\kappa_g(x)}{\tau_g(x)}-\bigg(\frac{\lambda_1(x)\kappa_n(x)}{\tau_g(x)}\bigg)'\frac{1}{\tau_g(x)}$ and $t(x)=\int{\tau_g(x)}\,dx$.
\end{thm}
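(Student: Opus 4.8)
The plan is to expand $\beta$ in the moving frame, differentiate once, and read off a linear ODE system whose only genuinely coupled part is a planar rotation with forcing.

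First I would write $\beta(x)=\lambda_1(x)T+\lambda_2(x)Q+\lambda_3(x)n$ for unknown coefficient functions $\lambda_1,\lambda_2,\lambda_3$. Differentiating and inserting the Darboux relations (\ref{Darboux}), namely $T'=\kappa_g Q+\kappa_n n$, $Q'=\tau_g n$ and $n'=-\tau_g Q$, gives $\beta'=\lambda_1' T+(\lambda_1\kappa_g+\lambda_2'-\tau_g\lambda_3)Q+(\lambda_1\kappa_n+\tau_g\lambda_2+\lambda_3')n$. Since $\alpha$ is unit speed, $\beta'=T$, and comparing the $T$, $Q$, $n$ components yields the system $\lambda_1'=1$, $\lambda_1\kappa_g+\lambda_2'-\tau_g\lambda_3=0$, $\lambda_1\kappa_n+\tau_g\lambda_2+\lambda_3'=0$. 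The first equation integrates at once to $\lambda_1(x)=x+c_1$, which will later reproduce the leading $(x+c_1)$ factors in (\ref{eq00}).

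The heart of the proof is solving the remaining coupled pair for $(\lambda_2,\lambda_3)$. I would peel off a particular solution by substituting $\lambda_2=-\dfrac{\lambda_1\kappa_n}{\tau_g}+u$. In the third equation this cancels the term $\lambda_1\kappa_n$ and leaves the clean relation $\lambda_3'=-\tau_g u$; in the second equation it produces $u'=\tau_g\lambda_3-\big(\lambda_1\kappa_g-(\lambda_1\kappa_n/\tau_g)'\big)$, and here the definition $f=\frac{\lambda_1\kappa_g}{\tau_g}-\big(\frac{\lambda_1\kappa_n}{\tau_g}\big)'\frac{1}{\tau_g}$ is exactly what rewrites the bracket as $f\tau_g$. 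Thus $(u,\lambda_3)$ obeys the rotation-type system $u'=\tau_g\lambda_3-f\tau_g$, $\lambda_3'=-\tau_g u$. To integrate it I would pass to the complex unknown $z=u+i\lambda_3$, which satisfies $z'=-i\tau_g z-f\tau_g$. Multiplying by the integrating factor $e^{i t(x)}$ with $t(x)=\int\tau_g\,dx$ turns the left side into $(z e^{it})'$ and gives $z=e^{-it}\big(C-\int f\tau_g e^{it}\,dx\big)$ for a complex constant $C$ whose real and imaginary parts furnish $c_2$ and $c_3$. Expanding $e^{\pm it}=\cos t\pm i\sin t$ and separating real and imaginary parts returns $u$ and $\lambda_3$ in precisely the stated trigonometric shape; restoring $\lambda_2=-\lambda_1\kappa_n/\tau_g+u$ and $\lambda_1=x+c_1$ then assembles (\ref{eq00}).

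The step I expect to be the main obstacle is the reduction in the previous paragraph: guessing the particular term $-\lambda_1\kappa_n/\tau_g$ and recognizing that the specific combination defining $f$ is what collapses the forcing to the single function $f\tau_g$. Once the system is in rotation form, the integrating-factor computation is routine, and the only care needed is the bookkeeping of the constants $c_1,c_2,c_3$ and the sign conventions when matching real and imaginary parts to the coefficients of $Q$ and $n$ in (\ref{eq00}).
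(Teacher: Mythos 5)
Your proposal is correct and is essentially the paper's own proof: the same frame expansion $\beta=\lambda_1 T+\lambda_2 Q+\lambda_3 n$, the same linear system with $\lambda_1(x)=x+c_1$, and the same reduction of the coupled $(\lambda_2,\lambda_3)$ pair through the particular term $-\lambda_1\kappa_n/\tau_g$ and the variable $t=\int\tau_g\,dx$ --- your $u$ is exactly the paper's $-\dot{\lambda}_3$, so your first-order rotation system is the standard first-order form of the paper's oscillator $\ddot{\lambda}_3+\lambda_3=f$. Your complexification $z=u+i\lambda_3$ with integrating factor $e^{it}$ simply derives explicitly the variation-of-parameters formula that the paper quotes as ``the general solution,'' and after renaming the complex constant (take $C=-c_3+ic_2$) it reproduces (\ref{eq00}) exactly.
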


\begin{proof}Let $\beta(x)$ be an arbitrary curve on a surface in the $G^3$, then, we may express its position vectors with respect to the Darboux frame as follows:
	\begin{equation}\label{eq0}
	\beta(x)=\lambda_1(x)\mathbf{T}+\lambda_2(x)\mathbf{Q}+\lambda_3(x)\mathbf{n}
	\end{equation}
	where $\lambda_1(x), \lambda_2(x)$ and $\lambda_3(x)$ are differentiable functions of $x\in{I}\subset{\mathbb{R}}$. By differentiating (\ref{eq0}) and using (\ref{Darboux}), we get
	\begin{equation}\label{eq1}
	\begin{array}{rl}
	\lambda'_1(x)-1 &=0 \\
	\lambda_1(x)\kappa_g(x)+\lambda'_2(x)-\lambda_3(x)\tau_g(x) & =0\\
	\lambda_1(x)\kappa_n(x)+\lambda_2(x)\tau_g(x)+\lambda'_3(x) &=0
	
	\end{array}.
	\end{equation}
	The first equation of (\ref{eq1}) leads to
	\begin{equation}\label{eq1.k}
	\lambda_1(x)=x+c_1
	\end{equation}
	where $c_1$ is an arbitrary real constant.
	To solve (\ref{eq1}) for $\lambda_i$, we use the following change of variable $t =\int{\tau_g(x)dx}$ so that
	
	\begin{equation}\label{eq2.k}
	\begin{array}{rl}
	\lambda_1(t) &= (\lambda_1 \circ x)(t),\\
	 \tau_g(t) &= (\tau_g \circ x)(t),\\
	 \\
	\lambda_2(t)&=-\frac{\lambda_1(t)\kappa_n(t)}{\tau_g(t)}-\dot{\lambda_3}(t).
	\end{array}
	\end{equation}
	Here, "\,$\dot{}$\," stands for derivative with respect to $t$.
	
	Substituting (\ref{eq2.k}) into (\ref{eq1}) we get the following equation
	\begin{equation}
	\ddot{\lambda_3}(t)+\lambda_3(t)=\frac{\lambda_1(t)\kappa_g(t)}{\tau_g(t)}-\bigg(\frac{\lambda_1(t)\kappa_n(t)}{\tau_g(t)}\dot{\bigg)}.
	\end{equation}
	The general solution becomes
	\begin{equation}\label{eq3.k}
	\lambda_3(t)= \bigg[c_2-\int{f(t)\sin{t}}dt\bigg]\cos{t}+\bigg[c_3+\int{f(t)\cos{t}}dt\bigg]\sin{t}
	\end{equation}
	where $c_2, c_3$ are arbitrary real constants and $f(t)=\frac{\lambda_1(t)\kappa_g(t)}{\tau_g(t)}-\bigg(\frac{\lambda_1(t)\kappa_n(t)}{\tau_g(t)}\dot{\bigg)}.$ By differentiating (\ref{eq3.k}) and plug the resulting equation into (\ref{eq2.k}), we obtain
	\begin{equation}\label{eq4}
	\lambda_2(t)=-\frac{\lambda_1(t)\kappa_n(t)}{\tau_g(t)}+\bigg[c_2-\int{f(t)\sin{t}}dt\bigg]\sin{t}-\bigg[c_3+\int{f(t)\cos{t}}dt\bigg]\cos{t}.
	\end{equation}
	As a result the equations (\ref{eq3.k}) and (\ref{eq4}) becomes
	\begin{equation}\label{eq4*}
	\begin{split}
	\lambda_2(x)=-\frac{(x+c_1)\kappa_n(x)}{\tau_g(x)}&+\bigg(c_2-\int{f(x)\tau_g(x)\sin{[t(x)]}}\,dx\bigg)\sin{[t(x)]}\\&-\bigg(c_3+\int{f(x)\tau_g(x)\cos{[t(x)]}}\,dx\bigg)\cos{[t(x)]}.
	\end{split}
	\end{equation}
	and
	\begin{equation}\label{eq3*}
	\begin{split}
	\lambda_3(x)&= \bigg(c_2-\int{f(x)\tau_g(x)\sin{[t(x)]}}\,dx\bigg)\cos{[t(x)]}\\&+\bigg(c_3+\int{f(x)\tau_g(x)\cos{[t(x)]}}\,dx\bigg)\sin{[t(x)]}
	\end{split}
	\end{equation}
	
	where $f(x)=\frac{\lambda_1(x)\kappa_g(x)}{\tau_g(x)}-\bigg(\frac{\lambda_1(x)\kappa_n(x)}{\tau_g(x)}\bigg)'\frac{1}{\tau_g(x)}$ and $t(x)=\int{\tau_g(x)}\,dx$.
	
	Substituting equations (\ref{eq1.k}), (\ref{eq4*}) and (\ref{eq3*}) to (\ref{eq0}) we obtain (\ref{eq00}). This completes the proof.
\end{proof}


\begin{thm}\label{teo2}
	The position vector $\beta(x)$ of an arbitrary curve on a surface with respect to the standard frame in the Galilean space $G^3$ is computed from the natural representation form:
	\begin{equation}\label{eq11}
	\begin{split}
	\beta(x)=\Bigg(x,& \int\bigg[\int\Big(\kappa_g(x)S_{\tau_g}-\kappa_n(x)\int{\tau_g(x)S_{\tau_g}\, dx}\Big)\, dx\bigg]\, dx,\\& \int\bigg[\int\Big(\kappa_g(x)C_{\tau_g}-\kappa_n(x)\int{\tau_g(x)C_{\tau_g}\, dx}               \Big)\, dx\bigg]\, dx\Bigg)
	\end{split}
	\end{equation}
	where $C_{\tau_g}=\cos\big[\int{\tau_g(x)}\,dx\big]$ and $S_{\tau_g}=\sin\big[\int{\tau_g(x)}\,dx\big]$.
\end{thm}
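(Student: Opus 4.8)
The plan is to exploit the fact that in $G^3$ a unit-speed curve necessarily has the form $\beta(x)=(x,y(x),z(x))$, so the position vector is recovered coordinate-by-coordinate once $y$ and $z$ are known. Since $\beta$ is p.b.a.l.\ we have $\beta'=T=(1,y',z')$ and hence $T'=\beta''=(0,y'',z'')$. On the other hand, the first row of the Darboux system (\ref{Darboux}) reads $T'=\kappa_g\mathbf{Q}+\kappa_n\mathbf{n}$. Matching the second and third components of these two expressions for $T'$ will write $y''$ and $z''$ in terms of $\kappa_g$, $\kappa_n$ and the standard-coordinate components of $\mathbf{Q}$ and $\mathbf{n}$; integrating twice then yields $y$ and $z$, while the first coordinate stays $x$, accounting for the entry $x$ in (\ref{eq11}).

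The heart of the argument is therefore to express $\mathbf{Q}$ and $\mathbf{n}$ in standard coordinates. Because $\mathbf{n}$ is the surface normal and $\mathbf{Q}=\mathbf{n}\times_G T$, both are isotropic, i.e.\ $\mathbf{Q}=(0,Q_2,Q_3)$ and $\mathbf{n}=(0,n_2,n_3)$; the vanishing of their first components is exactly what keeps the first coordinate of $\beta$ equal to $x$. Substituting these into the last two rows of (\ref{Darboux}), namely $\mathbf{Q}'=\tau_g\mathbf{n}$ and $\mathbf{n}'=-\tau_g\mathbf{Q}$, the system decouples into two identical planar systems, one for $(Q_2,n_2)$ and one for $(Q_3,n_3)$. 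I would then reuse the change of variable $t=\int\tau_g(x)\,dx$ from the proof of Theorem~\ref{teo1}, under which each planar system becomes $\dot{Q}_i=n_i$, $\dot{n}_i=-Q_i$, i.e.\ $\ddot{Q}_i+Q_i=0$. Solving this with the frame normalized at the base point gives $\mathbf{Q}=(0,S_{\tau_g},C_{\tau_g})$ and $\mathbf{n}=(0,C_{\tau_g},-S_{\tau_g})$. Equivalently, integrating $\dot{n}_i=-Q_i$ directly and returning to the variable $x$ via $dt=\tau_g\,dx$ produces the integral forms $n_2=-\int\tau_g S_{\tau_g}\,dx$ and $n_3=-\int\tau_g C_{\tau_g}\,dx$, which is the shape in which they occur in (\ref{eq11}).

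With $\mathbf{Q}$ and $\mathbf{n}$ in hand, I would substitute into $T'=\kappa_g\mathbf{Q}+\kappa_n\mathbf{n}$ to read off
\begin{equation*}
y''=\kappa_g S_{\tau_g}-\kappa_n\int\tau_g S_{\tau_g}\,dx,\qquad z''=\kappa_g C_{\tau_g}-\kappa_n\int\tau_g C_{\tau_g}\,dx,
\end{equation*}
and then integrate each expression twice with respect to $x$. Assembling $\beta=(x,y,z)$ reproduces exactly the natural representation (\ref{eq11}).

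The main obstacle is this middle step: solving the coupled system $\mathbf{Q}'=\tau_g\mathbf{n}$, $\mathbf{n}'=-\tau_g\mathbf{Q}$ in closed form. Since $\tau_g$ is a non-constant function, this is not immediate in the variable $x$; the substitution $t=\int\tau_g\,dx$ is precisely what turns the system into a constant-coefficient harmonic oscillator, after which the trigonometric solutions $S_{\tau_g}$ and $C_{\tau_g}$ emerge. Everything else---checking that $\mathbf{Q}$ and $\mathbf{n}$ are isotropic so that the first coordinate remains $x$, and performing the two outer integrations---is routine.
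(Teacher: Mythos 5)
Your proposal is correct and follows essentially the same route as the paper: the substitution $t=\int\tau_g(x)\,dx$ turning $\mathbf{Q}'=\tau_g\mathbf{n}$, $\mathbf{n}'=-\tau_g\mathbf{Q}$ into a harmonic oscillator, yielding $\mathbf{Q}=(0,S_{\tau_g},C_{\tau_g})$ and $\mathbf{n}$ in the integral form $-\int\tau_g\mathbf{Q}\,dx$, followed by a double integration of $T'=\kappa_g\mathbf{Q}+\kappa_n\mathbf{n}$ with the constants fixed by isotropy of $\mathbf{Q},\mathbf{n}$ and $T_1=1$. The only cosmetic difference is that the paper reaches the oscillator via the ansatz $\mathbf{Q}=(0,\sin\theta(t),\cos\theta(t))$ and deduces $\dot{\theta}=\pm1$, whereas you solve the decoupled linear system directly --- the same computation in different clothing.
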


\begin{proof}If $\beta(x)$ is a curve on a surface in Galilean space $G^3$, then the Frenet
	equations (\ref{Darboux}) are hold. It is easy to see that the following differential equation is obtained by using (\ref{Darboux}). 
	$$\bigg(\frac{1}{\tau_g(x)}\mathbf{Q'(x)}\bigg)'=-\tau_g(x)\mathbf{Q(x)}$$
	The above equation can be written in the form
	\begin{equation}\label{eqQ}
	\frac{d^2\mathbf{Q}}{dt^2}+\mathbf{Q}=0,
	\end{equation}
	where $t$ is the new variable that equals to $t =\int\tau_g(x)\,dx$.
	
	Thus, we can write $\mathbf{Q}$ as follows:
	\begin{equation}\label{eqQ1}
	\mathbf{Q}=\big(0, \sin[\theta(t)], \cos[\theta(t)]\big)
	\end{equation}
	If we  substitute (\ref{eqQ1}) into (\ref{eqQ}), and solve componentwise, we get the following two equations
	$$\dot{\theta}(t)=\pm1, \hskip1cm \ddot{\theta}(t)=0$$
	which lead to $\theta(t) = \pm t = \pm \int \tau_g(x)\,dx$. Without loss of generality, we can assume that $\theta(t)$ has a positive sign. Then we get
	\begin{equation}\label{eqQx}
	\mathbf{Q}(x)=\bigg(0, \sin[\int \tau_g(x)\, dx], \cos[\int \tau_g(x)\, dx]\bigg).
	\end{equation}
	From (\ref{Darboux}), we obtain
	\begin{equation*}
	\begin{split}
	\mathbf{n}(x)&=-\int \tau_g(x)\mathbf{Q}(x)\, dx\\
	&=-\int \tau_g(x) \bigg(0, \sin[\int \tau_g(x)\, dx], \cos[\int \tau_g(x)\, dx]\bigg)\, dx + \mathbf{c} 
	\end{split}
	\end{equation*}
	where $\mathbf{c}$ is a constant vector. Since the first component of normal vector is zero, then we can take $\mathbf{c} = (0, 0, 0)$, and then
	\begin{equation}\label{eqNx}
	\mathbf{n}(x)=\Bigg(0, \cos\bigg(\int \tau_g(x)\, dx\bigg), -\sin\bigg(\int \tau_g(x)\, dx\bigg)\Bigg)
	\end{equation}
	From (\ref{eqQx}) and (\ref{Darboux}), we have
	\begin{equation}
	\begin{split}
	\mathbf{T'}(x)&=\kappa_g(x)\mathbf{Q}(x)+\kappa_n(x)\mathbf{n}(x)\\
	&=\kappa_g(x)\bigg(0, \sin[\int \tau_g(x)\, dx], \cos[\int \tau_g(x)\, dx]\bigg)\\
	&+\kappa_n(x)\bigg(0, -\int \tau_g(x)\sin[\int \tau_g(x)\, dx]\, dx, -\int \tau_g(x)\cos[\int \tau_g(x)\, dx]\, dx\bigg)\\
	&=\bigg(0, \kappa_g(x) \sin[\int \tau_g(x)\, dx]- \kappa_n(x)\int \tau_g(x)\sin[\int \tau_g(x)\, dx]\, dx\\
	&\hskip1cm, \kappa_g(x) \cos[\int \tau_g(x)\, dx]-\kappa_n(x)\int \tau_g(x)\cos[\int \tau_g(x)\, dx]\, dx\bigg).
	\end{split}
	\end{equation}
	If we let $C_{\tau_g}=\cos[\int \tau_g(x)\, dx]$ and $S_{\tau_g}=\sin[\int \tau_g(x)\, dx]$, then we have
	\begin{equation}\label{T'(x)}
	\mathbf{T'}(x)=\bigg(0, \kappa_g(x) S_{\tau_g}- \kappa_n(x)\int \tau_g(x)S_{\tau_g}\, dx, \kappa_g(x) C_{\tau_g}-\kappa_n(x)\int \tau_g(x)C_{\tau_g}\, dx\bigg).
	\end{equation}
 Taking the integral of (\ref{T'(x)}) with respect to, we get
	\begin{equation}
	\mathbf{T}(x)=\bigg(0, \int\Big(\kappa_g(x) S_{\tau_g}- \kappa_n(x)\int \tau_g(x)S_{\tau_g}\, dx\Big)\,dx, \int\Big(\kappa_g(x) C_{\tau_g}-\kappa_n(x)\int \tau_g(x)C_{\tau_g}\, dx\Big)\,dx\bigg)+\mathbf{d}
	\end{equation}
	where $\mathbf{d}$ is a constant vector. Since the first component of tangent vector is one, we can take $\mathbf{d} = (1, 0, 0)$, and then
	\begin{equation}\label{T(x)}
	\mathbf{T}(x)=\bigg(1, \int\Big(\kappa_g(x) S_{\tau_g}- \kappa_n(x)\int \tau_g(x)S_{\tau_g}\, dx\Big)\,dx, \int\Big(\kappa_g(x) C_{\tau_g}-\kappa_n(x)\int \tau_g(x)C_{\tau_g}\, dx\Big)\,dx\bigg).
	\end{equation}
	Integrating (\ref{T(x)}) with respect to $x$, we have
	\begin{equation}
	\begin{split}
	\beta(x)=\Bigg(x,& \int\bigg[\int\Big(\kappa_g(x)S_{\tau_g}-\kappa_n(x)\int{\tau_g(x)S_{\tau_g}\, dx}\Big)\, dx\bigg]\, dx,\\& \int\bigg[\int\Big(\kappa_g(x)C_{\tau_g}-\kappa_n(x)\int{\tau_g(x)C_{\tau_g}\, dx}               \Big)\, dx\bigg]\, dx\Bigg)
	\end{split}
	\end{equation}
	where $C_{\tau_g}=\cos\big[\int{\tau_g(x)}\,dx\big]$ and $S_{\tau_g}=\sin\big[\int{\tau_g(x)}\,dx\big]$
	which leads to the equation (\ref{eq11}) and the proof is complete.
\end{proof}

\section{Applications}
We begin a study of important special curves lying on surfaces. For example, geodesic, asymtotic and curvature (or principal) line. 
Let $\beta$ be regular curve on the oriented surface in $G^3$ with the curvature $\kappa$, the torsion $\tau$, the geodesic curvature $\kappa_g$, the normal curvature $\kappa_n$ and the geodesic torsion $\tau_g$. 
\begin{defn}\label{defgap}
	We can say that $\beta$ is
	\begin{equation*}
	\begin{split}
	geodesic \, curve  &\Longleftrightarrow \kappa_g\equiv 0,
	\\asymptotic \, curve & \Longleftrightarrow \kappa_n\equiv 0,
	\\line \, of \, curvature  & \Longleftrightarrow \tau_g\equiv 0.
	\end{split}
	\end{equation*}
	Also, We can say that $\beta$ is called:
	\begin{equation}\label{helsal}
	\begin{array}{ccc}
	\kappa, \tau  & \hskip 1cm& \beta\\
	\hline
	\kappa\equiv0 &\Longrightarrow &\textbf{a straight line.}\\
	\tau\equiv0 &\Longrightarrow &\textbf{a plane curve.}\\
	\kappa\equiv\textit{cons.$>$0},\tau\equiv\textit{cons.$>$0} &\Longrightarrow &\textbf{a circular helix or W-curve.}\\
	\frac{\tau}{\kappa}\equiv\textit{cons.} &\Longrightarrow &\textbf{a generalized helix.}\\
	\kappa\equiv\textit{cons.}, \tau\nequiv\textit{cons.} &\Longrightarrow &\textbf{Salkowski curve \cite{mon,sal}.}\\
	\kappa\nequiv\textit{cons.}, \tau\equiv\textit{cons.} &\Longrightarrow &\textbf{anti-Salkowski curve \cite{sal}.}\\
	\end{array}
	\end{equation}
\end{defn}

\subsection{The position vector of a family of geodesic line in the Galilean space $G^3$}

\begin{thm}\label{geo}
 The position vector $\beta_g(x)$ of a family of geodesic line in Galilean space $G^3$ is given by
\begin{equation}
	\mathbf{\beta_g}(x)=\Bigg(x, -\int\int\kappa_n(x)\int\tau_g(x)S_{\tau_g}\,dx\,dx\,dx, \\ -\int\int\kappa_n(x)\int\tau_g(x)C_{\tau_g}\,dx\,dx\,dx\Bigg).
	\end{equation}
\end{thm}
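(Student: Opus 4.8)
The plan is to obtain $\beta_g$ as a direct specialization of Theorem~\ref{teo2}. By Definition~\ref{defgap}, a geodesic line is characterized by $\kappa_g\equiv 0$, so the whole strategy is simply to substitute $\kappa_g(x)=0$ into the standard-frame representation~(\ref{eq11}) and simplify. First I would recall the general formula
\begin{equation*}
\beta(x)=\Bigg(x, \int\bigg[\int\Big(\kappa_g(x)S_{\tau_g}-\kappa_n(x)\int{\tau_g(x)S_{\tau_g}\, dx}\Big)\, dx\bigg]\, dx, \int\bigg[\int\Big(\kappa_g(x)C_{\tau_g}-\kappa_n(x)\int{\tau_g(x)C_{\tau_g}\, dx}\Big)\, dx\bigg]\, dx\Bigg),
\end{equation*}
where $C_{\tau_g}=\cos\big[\int\tau_g(x)\,dx\big]$ and $S_{\tau_g}=\sin\big[\int\tau_g(x)\,dx\big]$.

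Next I would set $\kappa_g(x)=0$ identically. This annihilates the first summand inside each of the two nontrivial component integrals, leaving in the second coordinate the integrand $-\kappa_n(x)\int\tau_g(x)S_{\tau_g}\,dx$ and in the third coordinate $-\kappa_n(x)\int\tau_g(x)C_{\tau_g}\,dx$. Pulling the sign out of the two outer integrations and writing the three nested integrations explicitly as $\int\int\,\cdots\,dx\,dx\,dx$ then reproduces exactly the stated expression for $\beta_g(x)$, with the first coordinate remaining $x$ because the $\kappa_g,\kappa_n$ terms never enter it.

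The only real point to verify is that $\kappa_g\equiv 0$ is a legitimate substitution into~(\ref{eq11}) rather than an over-specialization: I would note that Theorem~\ref{teo2} holds for an arbitrary curve on a surface with arbitrary curvature functions $\kappa_g,\kappa_n,\tau_g$, so imposing the single constraint $\kappa_g\equiv 0$ merely restricts to the geodesic subfamily and leaves the derivation of~(\ref{eq11}) intact. Consequently there is no genuine obstacle here; the ``hard part,'' such as it is, is purely bookkeeping: keeping the three levels of integration and the overall minus sign correctly aligned between the two coordinates so that the nested-integral notation matches the statement.
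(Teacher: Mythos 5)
Your proposal is correct and coincides with the paper's own proof, which likewise obtains the result by substituting $\kappa_g(x)\equiv 0$ (the defining condition of a geodesic from Definition~\ref{defgap}) into the standard-frame representation~(\ref{eq11}) of Theorem~\ref{teo2}. Your write-up is in fact more careful than the paper's one-line proof, since you also justify why the specialization is legitimate and track the signs and nested integrations explicitly.
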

\begin{proof}
	By using $\kappa_g(x)\equiv0$ in the equation (\ref{eq11}), we obtain the above equation.
\end{proof}

\begin{cor}
	The position vector of a geodesic that is a circular helix is defined by the equation
	$$\mathbf{\beta_{g_{ch}}(x)}=\Bigg(x,-\frac{e}{c^2}\cos(cx+c_1)+e_1x^2+e_2x+e_3, \frac{e}{c^2}\sin(cx+c_1)+f_1x^2+f_2x+f_3\Bigg)$$
	where $c,c_1, e, e_1, e_2, e_3, f_1, f_2$ and $f_3$ are constants.
\end{cor}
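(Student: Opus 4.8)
The plan is to specialize Theorem \ref{geo} to the circular-helix case and then carry out the resulting integrations explicitly. A geodesic has $\kappa_g \equiv 0$, so by Theorem \ref{geo} the position vector reduces to
\begin{equation*}
\mathbf{\beta_g}(x)=\Bigg(x,\ -\int\!\!\int\kappa_n(x)\int\tau_g(x)S_{\tau_g}\,dx\,dx\,dx,\ -\int\!\!\int\kappa_n(x)\int\tau_g(x)C_{\tau_g}\,dx\,dx\,dx\Bigg).
\end{equation*}
For a curve that is simultaneously a geodesic and a circular helix (a W-curve), I would impose the defining conditions of the helix case in (\ref{helsal}): $\kappa\equiv\textit{cons.}>0$ and $\tau\equiv\textit{cons.}>0$. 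The first task is to translate these Frenet-level constraints into the Darboux curvatures $\kappa_n$ and $\tau_g$ appearing in the integrand, using relation (\ref{kt}). Since $\kappa_g\equiv 0$ already, (\ref{kt}) gives $\kappa^2=\kappa_n^2$ and $\tau=-\tau_g$, so $\kappa_n$ and $\tau_g$ are themselves constants; write $\kappa_n\equiv c$ (a nonzero constant) and $\tau_g\equiv -c'$ for constants, so that $\int\tau_g\,dx$ is linear in $x$.

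With $\tau_g$ constant, I would first evaluate the innermost antiderivative $t(x)=\int\tau_g\,dx = cx+c_1$ (absorbing the integration constant into $c_1$), so that $S_{\tau_g}=\sin(cx+c_1)$ and $C_{\tau_g}=\cos(cx+c_1)$. The next step is to compute $\int\tau_g(x)S_{\tau_g}\,dx$ and $\int\tau_g(x)C_{\tau_g}\,dx$, which are elementary since $\tau_g$ is constant: these yield $-\cos(cx+c_1)$ and $\sin(cx+c_1)$ up to constants. Pulling the constant $\kappa_n$ out of the integrals, each of the two remaining double integrals then has the form $-\int\!\!\int\big[\text{const}\cdot(\mp\cos)\big]\,dx\,dx$, and integrating $\cos(cx+c_1)$ and $\sin(cx+c_1)$ twice produces a trigonometric term with denominator $c^2$ plus a quadratic polynomial arising from the two successive integration constants. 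This is exactly the structure $\mp\frac{e}{c^2}\cos(cx+c_1)$ (resp. $+\frac{e}{c^2}\sin$) together with $e_1x^2+e_2x+e_3$ and $f_1x^2+f_2x+f_3$ claimed in the statement; I would then rename the accumulated constants $\kappa_n, c_1$, and the four integration constants as $e, c, c_1, e_1, e_2, e_3, f_1, f_2, f_3$ to match the asserted form.

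The step I expect to be the main obstacle is bookkeeping rather than any genuine difficulty: keeping track of which integration constants collapse into the quadratic polynomials $e_ix^2+\dots$ and $f_ix^2+\dots$, and verifying the signs so that the cosine appears in the $y$-component and the sine in the $z$-component with the indicated signs. One should also be careful that a \emph{constant} $\tau_g$ is what the helix condition actually forces here; since $\kappa_g\equiv 0$ the relation $\tau=-\tau_g+\frac{\kappa'_g\kappa_n-\kappa_g\kappa'_n}{\kappa_g^2+\kappa_n^2}$ degenerates (the fraction has vanishing numerator once $\kappa_g\equiv 0$), giving $\tau=-\tau_g$, so constancy of $\tau$ is equivalent to constancy of $\tau_g$ and no hidden compatibility condition is lost. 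Once the integrands are reduced to constant multiples of $\sin(cx+c_1)$ and $\cos(cx+c_1)$, the remaining computation is a routine pair of iterated integrations and the conclusion follows by relabeling constants.
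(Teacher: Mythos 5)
Your proposal is correct and follows exactly the paper's route: the paper's proof is the one-line instruction to combine Definition \ref{defgap} and the relations (\ref{kt}) (which with $\kappa_g\equiv 0$ give $\kappa=|\kappa_n|$ and $\tau=-\tau_g$, forcing both constant) with Theorem \ref{geo}, and you have simply carried out that same specialization and the iterated integrations explicitly, with the signs and the quadratic integration-constant terms coming out as claimed. The only blemish is a harmless labeling slip early on (you set $\kappa_n\equiv c$ but then reuse $c$ as the coefficient in $t(x)=cx+c_1$), which your final relabeling of constants repairs.
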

\begin{proof}
	By using the definition (\ref{defgap}) and the equations (\ref{kt}) in (\ref{geo}), we get the above equation.
\end{proof}

\begin{cor}
	The position vector of a geodesic that is a generalized helix is defined by the equation
	$$\mathbf{\beta_{g_{gh}}(x)}=\Bigg(x,\int\int\kappa_n(x)\Big[\cos\Big(d\int\kappa_n(x)dx\Big)+d_1\Big]dxdx, -\int\int\kappa_n(x)\Big[\sin\Big(d\int\kappa_n(x)dx\Big)+d_2\Big]dxdx\Bigg)$$
	where $d, d_1$ and $d_2$ are constants.
\end{cor}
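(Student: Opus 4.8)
The plan is to specialize the geodesic position vector of Theorem~\ref{geo} to the generalized-helix case and then to evaluate the two elementary innermost integrals that appear there. First I would determine what the curvature data become for a curve that is at once a geodesic and a generalized helix. Putting $\kappa_g\equiv 0$ into the relations (\ref{kt}) collapses them to $\kappa=\kappa_n$ (up to sign; I take $\kappa_n>0$ so that $\kappa>0$) and $\tau=-\tau_g$, since the numerator $\kappa_g'\kappa_n-\kappa_g\kappa_n'$ vanishes identically once $\kappa_g\equiv 0$. The generalized-helix condition $\tau/\kappa\equiv\text{const}$ from (\ref{helsal}) then reads $-\tau_g/\kappa_n\equiv\text{const}$, that is $\tau_g=d\,\kappa_n$ for a constant $d$. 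Hence $\int\tau_g(x)\,dx=d\int\kappa_n(x)\,dx$, so that in the notation of (\ref{eq11}) one has $S_{\tau_g}=\sin\!\big(d\int\kappa_n\,dx\big)$ and $C_{\tau_g}=\cos\!\big(d\int\kappa_n\,dx\big)$.

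Next I would reduce the innermost integrals in the formula of Theorem~\ref{geo}. Using the substitution $u=\int\tau_g(x)\,dx$, so that $du=\tau_g(x)\,dx$, the two building blocks become textbook antiderivatives,
$$\int\tau_g\,S_{\tau_g}\,dx=\int\sin u\,du=-C_{\tau_g}+\text{const},\qquad\int\tau_g\,C_{\tau_g}\,dx=\int\cos u\,du=S_{\tau_g}+\text{const}.$$
Substituting these into the second and third coordinates of $\beta_g$ turns each triple integral into a double integral of $\kappa_n$ against $\pm C_{\tau_g}$ or $\pm S_{\tau_g}$ plus a constant, namely
$$\int\!\int\kappa_n\big(C_{\tau_g}-\text{const}\big)\,dx\,dx\qquad\text{and}\qquad-\int\!\int\kappa_n\big(S_{\tau_g}+\text{const}\big)\,dx\,dx.$$

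Finally I would rename the two leftover constants of integration as $d_1$ and $d_2$ and insert the helix expressions for $C_{\tau_g}$ and $S_{\tau_g}$ from the first step; this produces exactly the stated
$$\beta_{g_{gh}}(x)=\Bigg(x,\ \int\!\int\kappa_n\Big[\cos\Big(d\int\kappa_n\,dx\Big)+d_1\Big]\,dx\,dx,\ -\int\!\int\kappa_n\Big[\sin\Big(d\int\kappa_n\,dx\Big)+d_2\Big]\,dx\,dx\Bigg).$$
The calculation is routine once the data are pinned down; the only genuinely delicate step is the first one---extracting $\tau_g=d\kappa_n$---where I must verify that (\ref{kt}) really degenerates as claimed for $\kappa_g\equiv0$ and fix the sign and orientation conventions (the sign hidden in $d$ and the choice $\kappa_n>0$) so that the constant absorptions reproduce the exact signs appearing in the two components.
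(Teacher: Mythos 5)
Your proposal is correct and follows exactly the route of the paper's own (two-line) proof: the paper likewise uses $\kappa_g\equiv 0$ in (\ref{kt}) to get $\tau=-\tau_g$, $\kappa=\kappa_n$, hence $\tau_g=d\,\kappa_n$ from the generalized-helix condition, and then substitutes into Theorem~\ref{geo}. You simply make explicit the substitution $u=\int\tau_g\,dx$ and the absorption of the integration constants into $d_1,d_2$, details the paper leaves to the reader.
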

\begin{proof}
	By using the definition (\ref{defgap}) and the equations (\ref{kt}), we obtain $\tau_g(x)=d\kappa_n(x)$. By using this equation in (\ref{geo}), we get the above equation.
\end{proof}

\begin{cor}
	The position vector of a geodesic that is a Salkowski curve is defined by the equation
	$$\mathbf{\beta_{g_{s}}(x)}=\Bigg(x,m\int\int\bigg( \cos\Big(\int\tau_g(x)dx\Big)+m_1\bigg)dxdx, -m\int\int\bigg( \sin\Big(\int\tau_g(x)dx\Big)+m_2\bigg)dxdx\Bigg)$$
	where $m, m_1$ and $m_2$ are constants.
\end{cor}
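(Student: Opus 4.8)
The plan is to specialize the geodesic position vector of Theorem \ref{geo} to the Salkowski case, so my starting point is the formula
\[
\mathbf{\beta_g}(x)=\Bigg(x,\ -\int\int\kappa_n(x)\int\tau_g(x)S_{\tau_g}\,dx\,dx\,dx,\ -\int\int\kappa_n(x)\int\tau_g(x)C_{\tau_g}\,dx\,dx\,dx\Bigg).
\]
First I would translate the Salkowski hypothesis into a statement about $\kappa_n$. By Definition \ref{defgap} a geodesic satisfies $\kappa_g\equiv 0$, while a Salkowski curve satisfies $\kappa\equiv$ constant; feeding both into the Pythagorean relation $\kappa^2=\kappa_g^2+\kappa_n^2$ of (\ref{kt}) gives $\kappa_n^2=\kappa^2$, hence $\kappa_n\equiv m$ with $m=\pm\kappa$ a constant. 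For consistency I would also note that the torsion relation in (\ref{kt}) collapses to $\tau=-\tau_g$ when $\kappa_g\equiv 0$, so the remaining Salkowski requirement $\tau\not\equiv$ constant merely forces $\tau_g$ to be non-constant and imposes nothing further on the formula.

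The second step is to pull the constant $\kappa_n=m$ out of the iterated integrals and evaluate the innermost integral explicitly. The key observation is that under the substitution $u=\int\tau_g(x)\,dx$ (so that $du=\tau_g(x)\,dx$) one has
\[
\int\tau_g(x)\,S_{\tau_g}\,dx=\int\sin u\,du=-C_{\tau_g},\qquad
\int\tau_g(x)\,C_{\tau_g}\,dx=\int\cos u\,du=S_{\tau_g},
\]
each determined up to an additive constant of integration. Substituting these back, and renaming the leftover constants as $m_1$ and $m_2$ (after absorbing the overall signs into the prefactor $m$), turns the second and third components into $m\int\int\big(\cos[\int\tau_g\,dx]+m_1\big)\,dx\,dx$ and $-m\int\int\big(\sin[\int\tau_g\,dx]+m_2\big)\,dx\,dx$, which is exactly the claimed expression.

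I do not expect a genuine obstacle here: once $\kappa_n$ is shown to be constant, the computation reduces to a direct chain-rule substitution. The only point requiring care is the bookkeeping of the constants of integration. The innermost antiderivative introduces a single constant, and the proof must verify that this is precisely the free parameter $m_1$ (respectively $m_2$) displayed in the statement, rather than a constant that survives the two outer integrations and reappears as a polynomial-in-$x$ term — as happened in the circular-helix corollary, where the fully evaluated integrals produced the quadratic tails $e_1x^2+e_2x+e_3$ and $f_1x^2+f_2x+f_3$. Since here $\tau_g$ is left general and the outer integrals are not carried out, I would simply keep the constants symbolic through all three integrations and confirm that the stated two-parameter form captures the innermost freedom, which is the subtle part of the argument.
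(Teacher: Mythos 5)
Your proposal is correct and follows essentially the same route as the paper: the paper likewise combines Definition \ref{defgap} with the relations (\ref{kt}) to deduce $\kappa_n\equiv const.$ and $\tau_g\nequiv const.$, then substitutes into Theorem \ref{geo}. Your write-up simply makes explicit the substitution $u=\int\tau_g\,dx$ evaluating the innermost integral and the bookkeeping of the integration constants as $m_1$, $m_2$, details the paper leaves implicit.
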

\begin{proof}
	By using the definition (\ref{defgap}) and the equations (\ref{kt}), we obtain $\kappa_n(x)\equiv d(const.)$ and $\tau_g(x)\nequiv const.$. By using this equation in (\ref{geo}), we get the above equation.
\end{proof}

\begin{cor}
	The position vector of a geodesic that is a anti-Salkowski curve is defined by the equation
	$$\mathbf{\beta_{g_{as}}(x)}=\Bigg(x,\int\int\Big(\kappa_n(x) \Big[\cos(bx+b_1)+b_2\Big]\Big)dxdx, -\int\int\Big(\kappa_n(x) \Big[\sin(bx+b_1)+b_3\Big]\Big)dxdx\Bigg)$$
	where $b, b_1, b_2$ and $b_3$ are constants.
\end{cor}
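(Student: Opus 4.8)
The plan is to specialize the position vector of a family of geodesic lines supplied by Theorem \ref{geo} to the anti-Salkowski case, so the starting point is
\begin{equation*}
\mathbf{\beta_g}(x)=\Bigg(x, -\int\int\kappa_n(x)\int\tau_g(x)S_{\tau_g}\,dx\,dx\,dx, -\int\int\kappa_n(x)\int\tau_g(x)C_{\tau_g}\,dx\,dx\,dx\Bigg),
\end{equation*}
in which the geodesic hypothesis $\kappa_g\equiv0$ has already been imposed. First I would translate the anti-Salkowski conditions from (\ref{helsal}), namely that $\kappa$ is non-constant and $\tau\equiv\textit{cons.}$, into statements about the Darboux invariants. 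Because we are on a geodesic, $\kappa_g\equiv0$ (hence $\kappa_g'\equiv0$), and the relations (\ref{kt}) collapse to $\kappa=\kappa_n$ and $\tau=-\tau_g$. This second identity is the decisive step: it shows that $\tau\equiv\textit{cons.}$ forces $\tau_g$ to be constant, say $\tau_g\equiv b$, while $\kappa$ being non-constant keeps $\kappa_n$ genuinely non-constant, which is precisely why $\kappa_n(x)$ must remain inside the integrals.

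Next I would evaluate the auxiliary integrals under $\tau_g\equiv b$. One has $t(x)=\int\tau_g(x)\,dx=bx+b_1$, so that $S_{\tau_g}=\sin(bx+b_1)$ and $C_{\tau_g}=\cos(bx+b_1)$. The innermost integrals are then elementary,
\begin{equation*}
\int\tau_g(x)S_{\tau_g}\,dx=-\cos(bx+b_1)+\textit{cons.}, \qquad \int\tau_g(x)C_{\tau_g}\,dx=\sin(bx+b_1)+\textit{cons.}
\end{equation*}
Substituting these back into the two nonconstant components of $\mathbf{\beta_g}$, carrying the leading minus sign through the second component, and absorbing each constant of integration into fresh constants $b_2,b_3$ (the overall sign being harmless since these constants are arbitrary) yields exactly
\begin{equation*}
\mathbf{\beta_{g_{as}}}(x)=\Bigg(x,\int\int\Big(\kappa_n(x)\big[\cos(bx+b_1)+b_2\big]\Big)dxdx, -\int\int\Big(\kappa_n(x)\big[\sin(bx+b_1)+b_3\big]\Big)dxdx\Bigg),
\end{equation*}
which is the asserted formula.

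There is no genuine obstacle here; the argument is a direct substitution into Theorem \ref{geo} once the Darboux data are pinned down by Definition \ref{defgap}. The only points requiring care are the derivation $\tau\equiv\textit{cons.}\Rightarrow\tau_g\equiv\textit{cons.}$ through the simplified form of (\ref{kt}) on a geodesic, and the bookkeeping of the constants of integration, whose signs may be freely relabelled without altering the stated form.
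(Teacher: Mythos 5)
Your proposal is correct and follows essentially the same route as the paper's (very terse) proof: deduce from Definition \ref{defgap} and the relations (\ref{kt}) with $\kappa_g\equiv0$ that $\tau=-\tau_g$, hence $\tau_g\equiv b$ constant while $\kappa_n$ stays non-constant, and then substitute into Theorem \ref{geo}. You merely make explicit the elementary integrations $\int\tau_g S_{\tau_g}\,dx=-\cos(bx+b_1)+\textit{cons.}$ and $\int\tau_g C_{\tau_g}\,dx=\sin(bx+b_1)+\textit{cons.}$ and the sign/constant bookkeeping that the paper leaves implicit.
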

\begin{proof}
	By using the definition (\ref{defgap}) and the equations (\ref{kt}), we obtain $\tau_g(x)\equiv b(const.)$ and $\kappa_n(x)\nequiv const.$. By using this equation in (\ref{geo}), we get the above equation.
\end{proof}

\subsection{The position vector of a family of asymptotic line in the Galilean space $G^3$}

\begin{thm}\label{asym} The position vector $\beta_a(x)$ of a family of asymptotic line in Galilean space $G^3$ is given by
	\begin{equation}\label{asymeq}
	\mathbf{\beta_a}(x)=\Bigg(x, \int\int\kappa_g(x) \sin\bigg(\int{\tau_g(x)}\,dx\bigg)\,dx\,dx, \int\int\kappa_g(x) \cos\bigg(\int{\tau_g(x)}\,dx\bigg)\,dx\,dx\Bigg)
	\end{equation}
\end{thm}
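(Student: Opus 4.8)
The plan is to specialize the general standard-frame representation obtained in Theorem~\ref{teo2} to the asymptotic case. First I would recall from Definition~\ref{defgap} that a curve on the surface is asymptotic precisely when its normal curvature vanishes identically, i.e. $\kappa_n(x)\equiv 0$. This single defining condition is the only hypothesis I intend to impose, and the strategy mirrors the treatment of the geodesic family in Theorem~\ref{geo}, where instead $\kappa_g\equiv 0$ was used.

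Next I would take the master formula (\ref{eq11}) for the position vector $\beta(x)$ with respect to the standard frame and substitute $\kappa_n(x)\equiv 0$ directly. Inspecting (\ref{eq11}), one sees that the normal curvature enters only through the two terms $\kappa_n(x)\int \tau_g(x)S_{\tau_g}\,dx$ and $\kappa_n(x)\int \tau_g(x)C_{\tau_g}\,dx$ sitting inside the innermost integrands of the second and third components. Setting $\kappa_n\equiv 0$ makes both of these terms vanish, so the remaining integrands collapse to $\kappa_g(x)S_{\tau_g}$ and $\kappa_g(x)C_{\tau_g}$ respectively. After restoring the abbreviations $S_{\tau_g}=\sin[\int\tau_g(x)\,dx]$ and $C_{\tau_g}=\cos[\int\tau_g(x)\,dx]$, the resulting expression is exactly (\ref{asymeq}), which completes the argument.

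I expect no genuine obstacle here: the proof reduces to a one-line substitution into (\ref{eq11}). The only point worth verifying is consistency, namely that the derivation leading to (\ref{eq11}) in the proof of Theorem~\ref{teo2} nowhere divides by $\kappa_n$, so that the specialization is legitimate and does not require re-running the integration from scratch. A quick pass through that proof --- which solves first for $\mathbf{Q}$ via (\ref{eqQ}), then integrates to recover $\mathbf{n}$, then $\mathbf{T}$, and finally $\beta$ --- confirms that $\kappa_n$ appears throughout only as a linear coefficient and never in a denominator. Hence the substitution $\kappa_n\equiv 0$ is unambiguous, and the claimed formula for $\beta_a(x)$ follows immediately.
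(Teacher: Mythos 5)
Your proposal is correct and follows exactly the paper's own argument: the published proof consists precisely of substituting $\kappa_n(x)\equiv 0$ into equation (\ref{eq11}) of Theorem \ref{teo2}. Your additional check that the derivation of (\ref{eq11}) never divides by $\kappa_n$ is a sensible (if brief) extra verification that the paper omits, but it does not change the route.
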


\begin{proof}
	By using $\kappa_n(x)\equiv0$ in the equation (\ref{eq11}), we obtain the above equation.
\end{proof}
\begin{cor}
	The position vector of a asymptotic that is a circular helix is defined by the equation
	$$\mathbf{\beta_{a_{ch}}(x)}=\Bigg(x,-\frac{e}{c^2}\sin(cx+c_1)+c_2x+c_3, -\frac{e}{c^2}\cos(cx+c_1)+c_4x+c_5\Bigg)$$
	where $c,c_1,c_2, c_3, c_4, c_5$ and $e$ are constants.
\end{cor}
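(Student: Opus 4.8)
The plan is to specialize the asymptotic position vector formula of Theorem~\ref{asym} to the case of a circular helix and then evaluate the resulting iterated integrals. First I would invoke Definition~\ref{defgap}: being an asymptotic curve means $\kappa_n(x)\equiv 0$. Feeding this into the relations (\ref{kt}) immediately yields $\kappa(x)=\kappa_g(x)$ and $\tau(x)=-\tau_g(x)$, since the correction term in the torsion relation vanishes identically when $\kappa_n\equiv 0$.

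Next I would impose the circular-helix hypothesis from (\ref{helsal}), namely $\kappa\equiv\mathit{cons.}>0$ and $\tau\equiv\mathit{cons.}>0$. Combined with the previous step this forces both $\kappa_g$ and $\tau_g$ to be constants; write $\kappa_g(x)\equiv e$ and $\tau_g(x)\equiv c$. In particular $\int\tau_g(x)\,dx=cx+c_1$ for an integration constant $c_1$, so the inner trigonometric arguments in (\ref{asymeq}) collapse to $\sin(cx+c_1)$ and $\cos(cx+c_1)$.

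The remaining work is to carry out the two double integrations in (\ref{asymeq}) with these constant coefficients. For the second coordinate, $\int e\sin(cx+c_1)\,dx=-\frac{e}{c}\cos(cx+c_1)$ up to a constant, and integrating once more produces $-\frac{e}{c^2}\sin(cx+c_1)$ together with a linear polynomial $c_2x+c_3$ absorbing the two integration constants; the third coordinate is handled identically and gives $-\frac{e}{c^2}\cos(cx+c_1)+c_4x+c_5$. Assembling the three components, with the first equal to $x$ by (\ref{asymeq}), reproduces the stated expression for $\beta_{a_{ch}}(x)$.

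This argument is essentially pure bookkeeping, so I do not anticipate a genuine obstacle. The only point requiring a little care is the tracking of integration constants: each single integration introduces one constant, and the two constants generated along each coordinate combine into the linear terms $c_2x+c_3$ and $c_4x+c_5$, which is precisely why the five independent constants $c_1,\dots,c_5$ (besides $e$ and $c$) appear in the final formula.
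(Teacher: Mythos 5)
Your proposal is correct and follows the same route as the paper's own (very terse) proof: specialize Definition~4.1 and the relations (2.8) with $\kappa_n\equiv 0$ to force $\kappa_g\equiv e$, $\tau_g\equiv c$, then substitute into Theorem~4.7 and carry out the two elementary double integrations. Your version merely makes explicit the torsion relation $\tau=-\tau_g$ and the bookkeeping of integration constants that the paper leaves implicit.
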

\begin{proof}
	By using the definition (\ref{defgap}) and the equations (\ref{kt}) in (\ref{asymeq}), we obtain $\kappa_g(x)\equiv e, \tau_g(x)\equiv c$ where $e$ and $c$ are constants. By using this relation in (\ref{asymeq}), we get the above equation.
\end{proof}

\begin{cor}
	The position vector of a asymptotic that is a generalized helix is defined by the equation
	$$\mathbf{\beta_{a_{gh}}(x)}=\Bigg(x,-\frac{1}{k}\int \cos\Big(k\int\kappa_g(x)dx\Big)dx+k_1x+k_2, \frac{1}{k}\int \sin\Big(k\int\kappa_g(x)dx\Big)dx+k_3x+k_4\Bigg)$$
	where $k, k_1,k_2,k_3$ and $k_4$ are constants.
\end{cor}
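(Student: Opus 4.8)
The plan is to specialize the asymptotic position vector of Theorem~\ref{asym} to the generalized-helix case, so the whole argument runs parallel to the preceding corollaries. The only substantive input is an algebraic relation between $\tau_g$ and $\kappa_g$. Since an asymptotic curve satisfies $\kappa_n\equiv 0$ by Definition~\ref{defgap}, the relations (\ref{kt}) simplify dramatically: the first gives $\kappa=\kappa_g$, and the second collapses (because $\kappa_n\equiv 0$ forces $\kappa'_n\equiv 0$ and kills the quotient term) to $\tau=-\tau_g$. Imposing the generalized-helix condition $\tau/\kappa\equiv\text{const}$ then makes the ratio $\tau_g/\kappa_g$ constant, so that $\tau_g(x)=k\,\kappa_g(x)$ for some constant $k$, the sign being absorbed into $k$. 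This is exactly analogous to the relation $\tau_g=d\,\kappa_n$ used in the geodesic generalized-helix corollary.

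First I would substitute $\tau_g=k\,\kappa_g$ into formula (\ref{asymeq}), which replaces the inner argument $\int\tau_g(x)\,dx$ by $k\int\kappa_g(x)\,dx$. The innermost integrands of the last two components then take the forms $\kappa_g(x)\sin\!\big(k\int\kappa_g\,dx\big)$ and $\kappa_g(x)\cos\!\big(k\int\kappa_g\,dx\big)$. The key manoeuvre is the substitution $u=k\int\kappa_g(x)\,dx$, so that $du=k\,\kappa_g(x)\,dx$; each inner integral reduces to an elementary trigonometric integral in $u$, giving $-\tfrac{1}{k}\cos\!\big(k\int\kappa_g\,dx\big)$ and $\tfrac{1}{k}\sin\!\big(k\int\kappa_g\,dx\big)$ respectively, each up to an additive constant of integration.

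Finally I would perform the outer integration. The additive constants produced at the inner stage integrate to the linear terms $k_1x$ and $k_3x$, while the outer integration itself contributes the remaining additive constants $k_2$ and $k_4$; collecting everything reproduces the stated expression for $\beta_{a_{gh}}(x)$, whose first component $x$ is inherited unchanged from Theorem~\ref{asym}. I do not expect a genuine obstacle: the computation is routine once $\tau_g=k\,\kappa_g$ is in hand, and the only points requiring care are the correct bookkeeping of the four constants of integration $k_1,k_2,k_3,k_4$ and the verification that the sign convention folded into $k$ agrees with the one recorded in the statement.
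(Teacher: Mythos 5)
Your proposal is correct and follows exactly the paper's route: deduce $\tau_g=k\kappa_g$ from Definition~4.1 and the relations (2.8) under $\kappa_n\equiv 0$, then substitute into Theorem~4.7's formula and evaluate the inner integrals. The only difference is that you spell out the elementary substitution $u=k\int\kappa_g\,dx$ and the bookkeeping of integration constants, details the paper's one-line proof leaves implicit.
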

\begin{proof}
	By using the definition (\ref{defgap}) and the equations (\ref{kt}), we obtain $\tau_g(x)=k\kappa_g(x)$ where $k$ is a constant. By using this relation in (\ref{asymeq}), we get the above equation.
\end{proof}

\begin{cor}
	The position vector of a asymptotic that is a Salkowski curve is defined by the equation
	$$\mathbf{\beta_{a_{s}}(x)}=\Bigg(x,\int\int\bigg(e \sin\Big(\int\tau_g(x)dx\Big)\bigg)dxdx, \int\int\bigg(e \cos\Big(\int\tau_g(x)dx\Big)\bigg)dxdx\Bigg)$$
	where $e$ is a constant.
\end{cor}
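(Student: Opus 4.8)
The plan is to obtain this corollary as a direct specialization of Theorem~\ref{asym}, exactly in the spirit of the preceding corollaries: I would start from the asymptotic position vector (\ref{asymeq}) and then impose the Salkowski constraints through the relations (\ref{kt}). The key preliminary observation is that for an asymptotic curve one has $\kappa_n \equiv 0$ by Definition~\ref{defgap}, so (\ref{kt}) collapses to $\kappa(x)=\kappa_g(x)$ and $\tau(x)=-\tau_g(x)$; here the cross term $\kappa_g'\kappa_n-\kappa_g\kappa_n'$ in the torsion formula vanishes identically because $\kappa_n$, and hence $\kappa_n'$, are both zero.

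First I would invoke the Salkowski characterization from the table (\ref{helsal}), namely $\kappa \equiv \mathit{const.}$ with $\tau \not\equiv \mathit{const.}$ Combining this with the two identities just derived, the constancy of $\kappa$ forces $\kappa_g(x)\equiv e$ for some constant $e$ (positive, since $\kappa>0$ throughout by our standing assumption), while the non-constancy of $\tau$ forces $\tau_g(x)\not\equiv \mathit{const.}$ In particular the geodesic curvature reduces to the single constant $e$, whereas the geodesic torsion $\tau_g$ survives as a genuine non-constant function of $x$ inside the inner integral.

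Next I would substitute $\kappa_g(x)=e$ into (\ref{asymeq}). Since $e$ is constant it factors through the integrals, turning the two inner integrands $\kappa_g(x)\sin\!\big(\int\tau_g\,dx\big)$ and $\kappa_g(x)\cos\!\big(\int\tau_g\,dx\big)$ into $e\sin\!\big(\int\tau_g\,dx\big)$ and $e\cos\!\big(\int\tau_g\,dx\big)$ respectively. Reading off the three components then yields precisely the asserted expression for $\beta_{a_s}(x)$, and the proof concludes.

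Because every step is either a substitution or a direct reading of (\ref{kt}), there is essentially no computational difficulty in this corollary. The one point that warrants care—and which I regard as the main, if modest, obstacle—is the reduction of the torsion relation under $\kappa_n \equiv 0$: one must confirm that the fractional term does not produce an indeterminate $0/0$. This is guaranteed because the numerator vanishes identically while the denominator $\kappa_g^2+\kappa_n^2=\kappa_g^2$ is strictly positive (as $\kappa_g=\kappa>0$), so the fraction is genuinely zero and $\tau=-\tau_g$ holds without ambiguity. Everything else then follows automatically.
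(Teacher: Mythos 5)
Your proposal is correct and follows essentially the same route as the paper: specialize Theorem~\ref{asym} by using Definition~\ref{defgap} together with the relations (\ref{kt}) to deduce $\kappa_g\equiv e$ constant and $\tau_g\not\equiv\textit{const.}$, then substitute into (\ref{asymeq}). You merely spell out details the paper leaves implicit (the reduction $\kappa=\kappa_g$, $\tau=-\tau_g$ under $\kappa_n\equiv 0$ and the non-degeneracy of the fraction in the torsion formula), which is a welcome but not substantively different elaboration.
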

\begin{proof}
	By using the definition (\ref{defgap}) and the equations (\ref{kt}), we obtain $\kappa_g(x)\equiv e(const.)$ and $\tau_g(x)\nequiv const.$. By using this equation in (\ref{asymeq}), we get the above equation.
\end{proof}

\begin{cor}
	The position vector of a asymptotic that is a anti-Salkowski curve is defined by the equation
	$$\mathbf{\beta_{a_{as}}(x)}=\Bigg(x,\int\int\Big(\kappa_g(x) \sin(dx+d_1)\Big)dxdx, \int\int\Big(\kappa_g(x) \cos(dx+d_1)\Big)dxdx\Bigg)$$
	where $d$ and $d_1$ are constants.
\end{cor}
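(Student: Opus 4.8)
The plan is to read off this formula as a direct specialization of Theorem~\ref{asym}: the position vector of a general asymptotic line is already supplied by~(\ref{asymeq}), so the only work is to impose the anti-Salkowski conditions and simplify the integrand. The genuine content lies in translating the anti-Salkowski characterization, which Definition~\ref{defgap} phrases in terms of the Frenet invariants $\kappa$ and $\tau$, into conditions on the Darboux invariants $\kappa_g$ and $\tau_g$ that actually appear in~(\ref{asymeq}).

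First I would use that an asymptotic curve satisfies $\kappa_n\equiv 0$. Substituting this into the two relations~(\ref{kt}) collapses the first to $\kappa=\kappa_g$ and annihilates the numerator of the second, leaving $\tau=-\tau_g$. These two identities form the bridge between the Frenet data and the Darboux data, and they are exactly the bridge already exploited in the preceding corollaries of this subsection.

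Next I would impose the anti-Salkowski conditions from~(\ref{helsal}), namely that $\tau$ is constant while $\kappa$ is non-constant. Through the identities just obtained these become $\tau_g\equiv\textit{const.}$, which I name $d$, together with $\kappa_g\not\equiv\textit{const.}$ Integrating $\tau_g=d$ yields $\int\tau_g(x)\,dx=dx+d_1$ for an integration constant $d_1$, so that $S_{\tau_g}=\sin(dx+d_1)$ and $C_{\tau_g}=\cos(dx+d_1)$. Substituting these into~(\ref{asymeq}) reproduces precisely the asserted expression; since $\kappa_g$ remains non-constant it cannot be pulled out of the integrals or evaluated, which is why it persists inside the double integrals in the final formula.

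I expect the argument to be routine, the single point deserving care being the bookkeeping of~(\ref{kt}): one must verify that setting $\kappa_n\equiv 0$ does indeed reduce both relations as claimed, so that the scalar conditions on the pair $(\kappa,\tau)$ convert cleanly into the scalar conditions $\tau_g\equiv d$ and $\kappa_g\not\equiv\textit{const.}$ Once that reduction is secured, the remaining computation is a single substitution into~(\ref{asymeq}) and is entirely mechanical.
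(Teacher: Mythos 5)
Your proposal is correct and follows exactly the paper's route: the paper likewise derives $\tau_g\equiv d$ (constant) and $\kappa_g\not\equiv\textit{const.}$ from Definition (\ref{defgap}) together with the relations (\ref{kt}) under $\kappa_n\equiv 0$, and then substitutes into (\ref{asymeq}). You have merely made explicit the bookkeeping (that (\ref{kt}) reduces to $\kappa=\kappa_g$, $\tau=-\tau_g$, so $S_{\tau_g}=\sin(dx+d_1)$ and $C_{\tau_g}=\cos(dx+d_1)$) that the paper's one-line proof leaves implicit.
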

\begin{proof}
	By using the definition (\ref{defgap}) and the equations (\ref{kt}), we obtain $\tau_g(x)\equiv d(const.)$ and $\kappa_g(x)\nequiv const.$. By using this equation in (\ref{asymeq}), we get the above equation.
\end{proof}


\subsection{The position vector of a family of line of curvature in the Galilean space $G^3$}

\begin{thm}\label{cur} The position vector $\beta_c(x)$ of a family of  line of curvature in Galilean space $G^3$ is given by
	\begin{equation}
	\mathbf{\beta_p}(x)=\Bigg(x, \int\int\Big(c_1\kappa_g(x)- c_2\kappa_n(x)\Big)\,dx\,dx, \int\int\Big(c_3\kappa_g(x)-c_4\kappa_n(x)\Big)\,dx\,dx\Bigg)
	\end{equation}
	where $c_1, c_2, c_3$ and $c_4$ are constants.
\end{thm}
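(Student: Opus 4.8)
The plan is to derive this formula as a direct specialization of the general natural representation (\ref{eq11}) established in Theorem \ref{teo2}, following exactly the pattern used for the geodesic and asymptotic cases in Theorems \ref{geo} and \ref{asym}. The defining condition of a line of curvature, read off from Definition \ref{defgap}, is $\tau_g(x)\equiv 0$, so the entire argument reduces to substituting this condition into (\ref{eq11}) and simplifying the resulting integrals.

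First I would insert $\tau_g\equiv 0$ into the abbreviations $S_{\tau_g}=\sin[\int\tau_g(x)\,dx]$ and $C_{\tau_g}=\cos[\int\tau_g(x)\,dx]$. Since $\tau_g$ vanishes identically, the inner antiderivative $\int\tau_g(x)\,dx$ is a constant, and hence both $S_{\tau_g}$ and $C_{\tau_g}$ collapse to constants, which I would rename $c_1=\sin[\int\tau_g(x)\,dx]$ and $c_3=\cos[\int\tau_g(x)\,dx]$. Next I would treat the remaining integral expressions $\int\tau_g(x)S_{\tau_g}\,dx$ and $\int\tau_g(x)C_{\tau_g}\,dx$ that appear in the second and third components of (\ref{eq11}): with $\tau_g\equiv 0$ their integrands vanish, so each is itself a constant, which I would call $c_2$ and $c_4$. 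With these observations the inner integrand of the second component, $\kappa_g(x)S_{\tau_g}-\kappa_n(x)\int\tau_g(x)S_{\tau_g}\,dx$, becomes $c_1\kappa_g(x)-c_2\kappa_n(x)$, and likewise the third-component integrand becomes $c_3\kappa_g(x)-c_4\kappa_n(x)$. The first component is unaffected and stays equal to $x$, so assembling the three components yields the stated formula for $\mathbf{\beta_p}(x)$.

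I do not expect a genuine obstacle here, since the computation amounts to a single substitution into an already-proven master formula; the only point demanding attention is careful bookkeeping of the constants of integration, making sure that each trigonometric value and each vanishing-integrand integral produced by setting $\tau_g\equiv 0$ is consistently absorbed into exactly one of the four constants $c_1,c_2,c_3,c_4$. In particular one should note that these constants are not arbitrary but are fixed by the (constant) value of $\int\tau_g(x)\,dx$ and by the corresponding constants of integration, so no independence is being claimed; they are simply the constants left over once $\tau_g$ is annihilated, which is all the statement asserts.
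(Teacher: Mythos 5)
Your proposal is correct and follows essentially the same route as the paper: the paper's one-line proof substitutes $\tau_g\equiv 0$ into the master formula of Theorem \ref{teo2} (its citation of (\ref{Darboux}) is evidently a typo for (\ref{eq11})), exactly as you do. Your added bookkeeping---collapsing $S_{\tau_g}$, $C_{\tau_g}$ and the vanishing-integrand integrals into the constants $c_1,\dots,c_4$, and noting these constants are not independent---is a careful spelling-out of what the paper leaves implicit.
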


\begin{proof}
	By using $\tau_g(x)\equiv0$ in the equation (\ref{Darboux}), we obtain the above equation.
\end{proof}

\begin{cor}
	The position vector of a line of curvature is a circular helix if and only if the below system of differential equations is satisfied.
	\begin{equation}\label{pcheq}
	\begin{split}
	\kappa_g(x)\kappa'_g(x)+\kappa_n(x)\kappa'_n(x) &=0,\\
	\kappa_n(x)\kappa''_g(x)-\kappa_g(x)\kappa''_n(x) &=0.
	\end{split}
	\end{equation}
\end{cor}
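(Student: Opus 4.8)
The plan is to characterize the circular-helix condition directly through the curvature $\kappa$ and torsion $\tau$ of $\beta$, and then to translate it into conditions on $\kappa_g$ and $\kappa_n$ by means of the relations (\ref{kt}). By the classification (\ref{helsal}), the curve is a circular helix exactly when $\kappa \equiv \textit{const.} > 0$ and $\tau \equiv \textit{const.} > 0$. Since $\beta$ is a line of curvature, Definition~\ref{defgap} gives $\tau_g(x) \equiv 0$, so (\ref{kt}) collapses to
\begin{equation*}
\kappa^2(x) = \kappa_g^2(x) + \kappa_n^2(x), \qquad \tau(x) = \frac{\kappa_g'(x)\kappa_n(x) - \kappa_g(x)\kappa_n'(x)}{\kappa_g^2(x) + \kappa_n^2(x)}.
\end{equation*}
These two identities are the bridge between the intrinsic data $(\kappa,\tau)$ and the surface data $(\kappa_g,\kappa_n)$.

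First I would handle the curvature condition. Differentiating the first identity shows that $\kappa \equiv \textit{const.}$ holds if and only if $(\kappa^2)' = 2\big(\kappa_g\kappa_g' + \kappa_n\kappa_n'\big) = 0$, which is precisely the first equation of (\ref{pcheq}); since $\kappa > 0$ throughout, constancy of $\kappa^2$ is the same as constancy of $\kappa$.

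Next I would handle the torsion condition, and here the two equations must be used together. Writing $D(x) := \kappa_g'(x)\kappa_n(x) - \kappa_g(x)\kappa_n'(x)$ for the numerator of $\tau$, a short computation gives
\begin{equation*}
D'(x) = \kappa_n(x)\kappa_g''(x) - \kappa_g(x)\kappa_n''(x),
\end{equation*}
the cross terms $\kappa_g'\kappa_n'$ cancelling. Once the first equation of (\ref{pcheq}) is in force, the denominator $\kappa_g^2 + \kappa_n^2 = \kappa^2$ is a nonzero constant, so $\tau = D/\kappa^2$ is constant if and only if $D$ is constant, i.e.\ if and only if $D' \equiv 0$, which is exactly the second equation of (\ref{pcheq}). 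Assembling both directions then yields the equivalence: the system (\ref{pcheq}) holds if and only if $\kappa$ and $\tau$ are simultaneously constant, if and only if $\beta$ is a circular helix.

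The step I expect to require the most care is the interdependence of the two equations: the second equation of (\ref{pcheq}) encodes constancy of $\tau$ only after the first equation has fixed the denominator $\kappa^2$ as a constant, so neither equation separately controls $\tau$. I would therefore be explicit that the reverse implication derives $\kappa \equiv \textit{const.}$ from the first equation and only then $\tau \equiv \textit{const.}$ from the second, in that order; and I would note that the positivity requirements $\kappa > 0$ (always assumed) and $\tau > 0$ are sign normalizations that do not affect the differential content of (\ref{pcheq}).
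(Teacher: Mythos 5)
Your proposal is correct and follows essentially the same route as the paper: the paper's proof is a one-line appeal to the classification (\ref{helsal}) and the relations (\ref{kt}) with $\tau_g\equiv 0$, and your argument simply supplies the details it leaves implicit --- differentiating $\kappa^2=\kappa_g^2+\kappa_n^2$ to get the first equation, and observing that the numerator $D=\kappa_g'\kappa_n-\kappa_g\kappa_n'$ of $\tau$ satisfies $D'=\kappa_n\kappa_g''-\kappa_g\kappa_n''$ to get the second. If anything you are more careful than the paper, in making explicit that the second equation yields constancy of $\tau$ only after the first has fixed the denominator, and in flagging the sign caveat (the system also admits $\tau\equiv 0$, i.e.\ plane curves) that the paper glosses over.
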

\begin{proof}
	By using the definition (\ref{helsal}) and the equations (\ref{kt}), we get the above equation.
\end{proof}
\textit{Special Case:}

If $\kappa_g(x)$ and $\kappa_n(x)$ are constant functions, then the equation (\ref{pcheq}) is satisfied. Therefore, the line of curvature with  $\kappa_g(x)\equiv const.$ and $\kappa_n(x)\equiv const.$ is a circular helix, and its position vector is defined by
\begin{equation}
\mathbf{\beta_{p_{ch}}}(x)=\Big(x, a_1x^2+a_2x+a_3, b_1x^2+b_2x+b_3\Big)
\end{equation}
where $a_1, a_2, a_3$ and $b_1,b_2, b_3$ are constants.
\begin{cor}
	The position vector of a line of curvature is a generalized helix if and only if the below differential equation is satisfied.
	\begin{equation}\label{pgheq}
	\kappa''_g\kappa_n^3+\kappa''_g\kappa_n\kappa_g^2-\kappa_n^2\kappa_g\kappa''_n-3\kappa_n^2\kappa'_g\kappa'_n-3\kappa_n\kappa_g{\kappa'_g}^{2}	+3\kappa_n\kappa_g{\kappa'_n}^{2}-\kappa_g^3\kappa''_n+3\kappa_g^2\kappa'_g\kappa'_n=0
	\end{equation}
\end{cor}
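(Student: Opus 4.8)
The plan is to characterize the generalized helix condition $\tau/\kappa \equiv \textit{const}$ purely in terms of the Darboux invariants $\kappa_g$ and $\kappa_n$, using the relations (\ref{kt}). Since the curve is a line of curvature, Definition \ref{defgap} gives $\tau_g \equiv 0$, so (\ref{kt}) collapses to
\[
\kappa = \sqrt{\kappa_g^2 + \kappa_n^2}, \qquad \tau = \frac{\kappa'_g\kappa_n - \kappa_g\kappa'_n}{\kappa_g^2 + \kappa_n^2}.
\]
First I would form the quotient
\[
\frac{\tau}{\kappa} = \frac{\kappa'_g\kappa_n - \kappa_g\kappa'_n}{(\kappa_g^2 + \kappa_n^2)^{3/2}},
\]
and observe that $\beta$ is a generalized helix if and only if this expression is constant, equivalently if and only if its derivative vanishes.

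Next I would apply the quotient rule. Writing $A = \kappa'_g\kappa_n - \kappa_g\kappa'_n$ and $D = (\kappa_g^2 + \kappa_n^2)^{3/2}$, the vanishing of $(A/D)'$ is equivalent to $A'D - AD' = 0$. A short computation gives $A' = \kappa''_g\kappa_n - \kappa_g\kappa''_n$, where the cross terms $\pm\kappa'_g\kappa'_n$ cancel, while $D' = 3(\kappa_g^2 + \kappa_n^2)^{1/2}(\kappa_g\kappa'_g + \kappa_n\kappa'_n)$.

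Then I would substitute these into $A'D - AD' = 0$ and factor out the common power $(\kappa_g^2 + \kappa_n^2)^{1/2}$, which is strictly positive for a regular curve, reducing the condition to
\[
(\kappa''_g\kappa_n - \kappa_g\kappa''_n)(\kappa_g^2 + \kappa_n^2) - 3(\kappa'_g\kappa_n - \kappa_g\kappa'_n)(\kappa_g\kappa'_g + \kappa_n\kappa'_n) = 0.
\]
Expanding both products monomial by monomial and collecting like terms should yield exactly (\ref{pgheq}). Since every step in this chain is an equivalence, the stated biconditional follows directly.

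The main obstacle is purely algebraic bookkeeping: one must track the eight monomials that survive the expansion and confirm their signs. The only subtle point is that the differentiation of $A$ produces the cancellation of the $\kappa'_g\kappa'_n$ terms, and the homogeneity in the factor $(\kappa_g^2 + \kappa_n^2)$ must be exploited to clear the fractional power cleanly; handling these correctly is what reproduces the precise coefficients $3$ and the sign pattern appearing in (\ref{pgheq}).
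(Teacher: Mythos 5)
Your proposal is correct and follows exactly the route the paper intends: its one-line proof (``using the definition (\ref{helsal}) and the equations (\ref{kt})'') is precisely your computation of setting $\tau_g\equiv 0$ in (\ref{kt}), forming $\tau/\kappa=(\kappa'_g\kappa_n-\kappa_g\kappa'_n)/(\kappa_g^2+\kappa_n^2)^{3/2}$, and requiring its derivative to vanish. Your expansion of $A'D-AD'=0$ after clearing the positive factor $(\kappa_g^2+\kappa_n^2)^{1/2}$ reproduces all eight monomials of (\ref{pgheq}) with the correct signs and coefficients, so the proof is complete as written.
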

\begin{proof}
By using the definition (\ref{helsal}) and the equations (\ref{kt}), we get the above equation.
\end{proof}

\begin{cor}\label{pseq}
	The position vector of a line of curvature is a Salkowski curve if and only if the following equation is satisfied: 
	\begin{equation*}
	\kappa_g\kappa'_g+\kappa_n\kappa'_n=0
	\end{equation*}
\end{cor}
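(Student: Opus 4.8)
The plan is to reduce the Salkowski requirement to a statement about the constancy of $\kappa$ and then read the asserted differential equation directly off the first relation in (\ref{kt}). Since $\beta$ is assumed to be a line of curvature, Definition \ref{defgap} gives $\tau_g \equiv 0$ throughout. By the table (\ref{helsal}), $\beta$ is a Salkowski curve precisely when $\kappa \equiv \textit{cons.}>0$ together with $\tau \not\equiv \textit{cons.}$, so the whole argument hinges on translating the condition $\kappa \equiv \textit{cons.}$ into a relation between $\kappa_g$ and $\kappa_n$.

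First I would invoke the identity $\kappa^2(x)=\kappa_g^2(x)+\kappa_n^2(x)$ from (\ref{kt}). Because $\kappa>0$, the requirement $\kappa \equiv \textit{cons.}$ is equivalent to $\kappa^2 \equiv \textit{cons.}$, hence to $\kappa_g^2+\kappa_n^2 \equiv \textit{cons.}$ Differentiating this identity with respect to $x$ gives $2\kappa_g\kappa'_g+2\kappa_n\kappa'_n=0$, which after dividing by $2$ is exactly the claimed equation $\kappa_g\kappa'_g+\kappa_n\kappa'_n=0$. For the converse direction I would simply integrate: the vanishing of $\kappa_g\kappa'_g+\kappa_n\kappa'_n$ says that $\frac{d}{dx}(\kappa_g^2+\kappa_n^2)=0$, so $\kappa_g^2+\kappa_n^2$, and therefore $\kappa$, is constant. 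Thus the displayed equation is equivalent to $\kappa\equiv\textit{cons.}$, establishing both implications.

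The one point that genuinely deserves care is the second half of the Salkowski definition, namely $\tau \not\equiv \textit{cons.}$: the single equation in the statement encodes only $\kappa\equiv\textit{cons.}$ and is therefore, strictly speaking, a characterization of lines of curvature with constant $\kappa$. To isolate the genuinely Salkowski case (and exclude the degenerate circular-helix case covered earlier), I would additionally note that, with $\tau_g\equiv 0$, the second relation in (\ref{kt}) collapses to $\tau=(\kappa'_g\kappa_n-\kappa_g\kappa'_n)/(\kappa_g^2+\kappa_n^2)$, and impose that this expression be non-constant. I expect this bookkeeping about the $\tau$-condition to be the only subtle step; the curvature algebra itself is a one-line differentiation of $\kappa^2=\kappa_g^2+\kappa_n^2$.
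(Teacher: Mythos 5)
Your proposal is correct and follows essentially the same route as the paper: the paper's one-line proof ("by the definition (\ref{helsal}) and the equations (\ref{kt})") is precisely your argument of differentiating $\kappa^2=\kappa_g^2+\kappa_n^2$ under $\tau_g\equiv 0$ and $\kappa\equiv\textit{cons.}$ Your added remark that the displayed equation only encodes constancy of $\kappa$, so the Salkowski condition $\tau\not\equiv\textit{cons.}$ must be imposed separately via the collapsed torsion formula, is a legitimate refinement that the paper silently omits.
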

\begin{proof}
	By using the definition (\ref{helsal}) and the equations (\ref{kt}), we get the above equation.
\end{proof}
\begin{cor}
	The position vector of a line of curvature is a anti-Salkowski curve if and only if the following equation is satisfied:
	\begin{equation}\label{paseq}
	\kappa''_g\kappa_g^2\kappa_n+\kappa''_g\kappa_n^3-\kappa''_n\kappa_g^3-\kappa''_n\kappa_g\kappa_n^2-2{\kappa'_g}^2\kappa_g\kappa_n+2\kappa'_g\kappa'_n\kappa_g^2-2\kappa'_g\kappa'_n\kappa_n^2+2{\kappa'_n}^2\kappa_g\kappa_n=0
	\end{equation}
\end{cor}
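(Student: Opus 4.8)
The plan is to exploit the single defining property of a line of curvature, $\tau_g \equiv 0$, in order to collapse the torsion formula (\ref{kt}) into an expression in $\kappa_g$ and $\kappa_n$ alone, and then to impose that this torsion be constant, as required for an anti-Salkowski curve by Definition (\ref{helsal}). First I would substitute $\tau_g \equiv 0$ into (\ref{kt}) to obtain
\begin{equation*}
\tau(x) = \frac{\kappa'_g(x)\kappa_n(x) - \kappa_g(x)\kappa'_n(x)}{\kappa_g^2(x) + \kappa_n^2(x)}.
\end{equation*}
By (\ref{helsal}), the line of curvature is anti-Salkowski precisely when $\kappa \not\equiv \mathrm{const.}$ while $\tau \equiv \mathrm{const.}$, so the essential condition to encode is $\tau'(x) = 0$.

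Next I would write $\tau = N/D$ with $N = \kappa'_g \kappa_n - \kappa_g \kappa'_n$ and $D = \kappa_g^2 + \kappa_n^2 = \kappa^2$. Since $D>0$ never vanishes, the quotient rule gives $\tau' = (N'D - ND')/D^2$, and hence $\tau' = 0$ is equivalent to $N'D - ND' = 0$. The decisive simplification here is that on differentiating $N$ the two mixed terms $\kappa'_g\kappa'_n$ cancel, leaving the clean expression $N' = \kappa''_g \kappa_n - \kappa_g \kappa''_n$, while $D' = 2(\kappa_g \kappa'_g + \kappa_n \kappa'_n)$.

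Finally I would expand $N'D - ND'$ and collect the eight resulting monomials. The product $N'D$ supplies the four second-derivative terms $\kappa''_g\kappa_g^2\kappa_n + \kappa''_g\kappa_n^3 - \kappa''_n\kappa_g^3 - \kappa''_n\kappa_g\kappa_n^2$, and $-ND'$ supplies the four first-derivative terms $-2{\kappa'_g}^2\kappa_g\kappa_n + 2\kappa'_g\kappa'_n\kappa_g^2 - 2\kappa'_g\kappa'_n\kappa_n^2 + 2{\kappa'_n}^2\kappa_g\kappa_n$; together these reproduce exactly the left-hand side of (\ref{paseq}). Thus $\tau' = 0$ holds if and only if (\ref{paseq}) is satisfied, which yields the claimed equivalence; the side requirement $\kappa \not\equiv \mathrm{const.}$ (equivalently $\kappa_g\kappa'_g + \kappa_n\kappa'_n \not\equiv 0$) merely excludes the circular-helix subcase and is not affected by (\ref{paseq}) itself.

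There is no genuine conceptual obstacle: the argument is the same "compute $\tau$, then differentiate" scheme used in the preceding corollaries. The only delicate point is the bookkeeping in the final expansion, which becomes entirely mechanical once one observes the cancellation of the $\kappa'_g\kappa'_n$ terms in $N'$; keeping the signs straight in $-ND'$ is where an arithmetic slip is most likely, so I would organize that expansion as the four explicit products above before combining.
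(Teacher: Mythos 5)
Your proposal is correct and follows exactly the route the paper indicates in its one-line proof: substitute $\tau_g\equiv 0$ into the torsion formula of (\ref{kt}), impose $\tau\equiv\mathrm{const.}$ from Definition (\ref{helsal}) via $\tau'=0$, and expand $N'D-ND'$, which reproduces (\ref{paseq}) term by term. Your explicit bookkeeping (including the cancellation of the $\kappa'_g\kappa'_n$ terms in $N'$) and your remark that the side condition $\kappa\not\equiv\mathrm{const.}$ is needed to exclude the circular-helix subcase are both accurate and in fact more careful than the paper's own terse argument.
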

\begin{proof}
	By using the definition (\ref{helsal}) and the equations (\ref{kt}), we get the above equation.
\end{proof}

We now consider an example for geodesic curve on surface along with their graphs.
\begin{example}
In \eqref{geo}, if we let $\kappa(x)=\sin x$ and $\tau(x)\equiv1$, we obtain
$$\displaystyle \alpha(x) = \Bigg(x, \frac{x-\sin(x)\cos(x)}{4}, \frac{\sin(x)^2-x^2}{4}\Bigg)$$.

A surface on which this curve lies can be taken as follows:
$$\phi(u,v)=\Bigg(u+v, \frac{u-\sin(u+v)\cos(u+v)}{4}, \frac{\sin(u+v)^2-u^2}{4}\Bigg)$$
\end{example}
\begin{figure}
\centering
\includegraphics[width=0.4\textwidth]{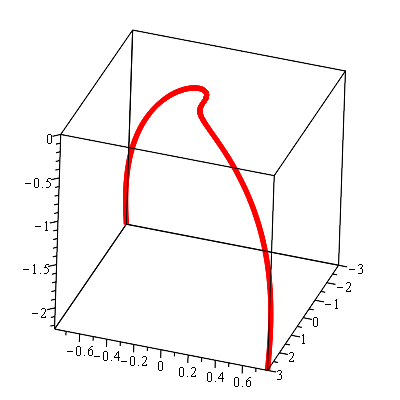}
\caption{The geodesic curve}\label{fig1:}
\end{figure}
\begin{figure}
\centering
\includegraphics[width=0.8\textwidth]{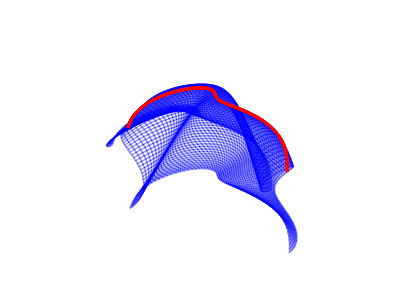}
\caption{The surface}\label{fig1:}
\end{figure}

\section{Conclusions}

This study is obtained the position vectors of all curves on a surface in $G^3$ with respect to the Darboux Frame. Firstly, the position vector of a curve on a surface in $G^3$ in terms of geodesic, normal curvature and geodesic torsion with respect to the Darboux and standard frame is investigated. As result of these, position vectors of some special curves such as geodesic, asymptotic curve, line of curvature on a surface is obtained in $G^3$.

Consequently, relations of foregoing curves with helix, Salkowski curve and anti-Salkowski curve are given(see (\ref{helsal}) ).
That is, special cases of these curves  such as: geodesics that are circular helix, genaralized helix or Salkowski, etc is given. Furthermore, the graphs of some special curves is drawn .  

In the light of these results,  we will study special smarandache curves with respect to Darboux frame in $G^3$ using these paper (arXiv 1707.03935v1). Also, we want to emphasize that the results of this study can be extended to families of surfaces that have common geodesic, asymptotic curve and line of curvature.
\ack 
This study was supported financially by the Research Centre of Amasya University (Project No:  FMB-BAP16-0213).


\end{document}